
\documentclass[twoside,a4paper,leqno,12pt]{amsproc}
\usepackage[top=30mm,right=30mm,bottom=30mm,left=30mm]{geometry}

\usepackage[pagebackref]{hyperref}
\hypersetup{citecolor=blue, linkcolor=blue, colorlinks=true}
\usepackage{amsmath, amssymb, amsthm, amsfonts, color, amsrefs, booktabs}
\usepackage{graphicx, float, array, rotating, verbatim,stmaryrd}
\usepackage{mathtools,datetime}

\renewcommand{\le}{\leqslant}
\renewcommand{\ge}{\geqslant}
\renewcommand{\leq}{\leqslant}
\renewcommand{\geq}{\geqslant}
\newcommand{\lhdeq}{\trianglelefteqslant}    

\newcommand{\Aut}{\textup{Aut}}
\newcommand{\cs}{{\rm cs}}
\newcommand{\Gbar}{\overline{G}}
\newcommand{\Hbar}{\overline{H}}
\newcommand{\la}{\langle}
\newcommand{\ra}{\rangle}
\newcommand{\Sym}{\textup{S}}
\newcommand{\Ker}{\textup{ker}}
\newcommand{\Z}{\mathrm{Z}}


\usepackage[shortlabels]{enumitem}
\setlist[enumerate]{label=\rm{(\alph*)}}

\theoremstyle{definition}
\newtheorem{definition}{Definition}
\newtheorem{remark}[definition]{Remark}
\theoremstyle{plain}

\newtheorem{theorem}[definition]{Theorem}
\newtheorem{proposition}[definition]{Proposition}
\newtheorem{lemma}[definition]{Lemma}


\begin{document}

\author{Mariagrazia Bianchi}
\address{\phantom{|}\kern-1cm Dipartimento di Matematica, Universita degli Studi Via Saldini 50, 20133 Milano, Italy. \emph{E-mail address:} {\tt\texttt{mariagrazia.bianchi@unimi.it}}}
\author{S.\,P. Glasby}
\author{Cheryl E. Praeger}
\address{\phantom{|}\kern-1cm S.\,P. Glasby and Cheryl E. Praeger, Centre for the Mathematics of Symmetry and Computation, University of Western Australia, 35 Stirling Highway, Perth 6009, Australia. \emph{E-mail addresses:} {\tt\texttt{\{stephen.glasby, cheryl.praeger\}@uwa.edu.au}}}

\thanks{\phantom{|}\kern-1cm Acknowledgements: SG and CP gratefully acknowledge support from the Australian Research Council Discovery Project DP190100450.
  MB acknowledges support from G.N.S.A.G.A.~(Indam) and thanks the
  Centre for the Mathematics of Symmetry and Computation (CMSC) for its~hospitality. This work began in the CMSC Research Retreat of 2019.
  CP also thanks the Isaac Newton Institute for Mathematical Sciences for support and hospitality during the programme ``Groups, Representations and applications: New Perspectives''. This program was supported by
EPSRC grant number EP/R014604/1. We are most grateful to Rachel Camina for suggesting an improvement to Proposition~\ref{T:coprime} and to
Avinoam Mann for alerting us to a number-theoretic lemma of John Thompson.
We thank the referee for carefully reading our paper and
suggesting improvements.
  \newline
  2010 Math Subject Classification: 20E-45, 20D-60. 
}

\makeatletter        
\def\@adminfootnotes{%
  \let\@makefnmark\relax  \let\@thefnmark\relax
  \ifx\@empty\@date\else \@footnotetext{\@setdate}\fi
  \ifx\@empty\@subjclass\else \@footnotetext{\@setsubjclass}\fi
  \ifx\@empty\@keywords\else \@footnotetext{\@setkeywords}\fi
  \ifx\@empty\thankses\else \@footnotetext{%
    \def\par{\let\par\@par}\@setthanks}%
  \fi}\makeatother   

\title[]{Conjugacy class sizes in arithmetic progression}


\subjclass[2010]{}
\date{\currenttime\ \today}                   

\begin{abstract}
  Let $\cs(G)$ denote the set of conjugacy class sizes of a group $G$,
  and let $\cs^*(G)=\cs(G)\setminus\{1\}$ be the sizes of non-central classes.
  We prove three results. We classify all finite groups for which
  (1)~$\cs(G)=\{a, a+d, \dots ,a+rd\}$ is an arithmetic progression with
  $r\ge2$; (2)~$\cs^*(G)=\{2,4,6\}$ the smallest case where $\cs^*(G)$
  is an arithmetic progression of length more than 2 (our most substantial
  result);  (3)~the largest two members of $\cs^*(G)$ are coprime. (For~(3)
  it is not obvious but it is true that $\cs^*(G)$ has two elements,
  and so is an arithmetic progression.)
\end{abstract}

\maketitle

\section{Introduction}\label{s:intro}

There is a well-known but mysterious bijection between the set of
irreducible characters of a finite group $G$ and the set of
conjugacy classes of $G$.
(For the symmetric groups $\Sym_n$ the bijection is understood
via the partitions of $n$.) It is surprising that the set $\textup{cd}(G)$ of
degrees of irreducible characters (over ${\mathbb{C}}$) of $G$ and the
set $\cs(G)$ of sizes of
conjugacy classes of $G$ both seem to impose strong constraints
on the structure of $G$. Surveys of these topics~\cites{L,H,Camina}
state theorems where related hypotheses
on $\textup{cd}(G)$ and $\cs(G)$ give
rise to similar structural constraints~on~$G$.

Huppert~\cite{bH} shows that if $G$ satisfies
$\textup{cd}(G)=\{1,2,\dots,k\}$, then $k\in\{1,2,3,4,6\}$
and he describes such groups for each $k$. An analogous result shows that
if $\cs(G)=\{1,2,\dots,k\}$, then $k\in\{1,2,3\}$, see~\cite{B92}*{Theorem~1}.
In general, it is hard to classify all groups $G$ with a specified value of
$\cs(G)$. We do this when $\cs(G)=\{a_0,a_1,\dots,a_r\}$ is an
arithmetic progression where $a_i=a_0+id$ for $i\ge0$, and $a_0,d\ge1$ (Prop.~\ref{T:coprime}).
The result relies on a classification of groups $G$ whose largest two
class sizes are coprime (Theorem~\ref{T:KL}). The latter strengthens
both~\cite{BGC}*{Theorem} and~\cite{DJ}*{Theorem~(B1)}.

Henceforth, all our groups will be finite.
%

\begin{theorem}\label{T:KL}
  Suppose that $G$ is a group with no non-trivial abelian direct factors and
  the largest two non-central conjugacy class sizes of $G$ are $m$ and $n$
  where $m<n$. Then $\gcd(m,n)=1$ if and only if
  $\cs(G)=\{1,m,n\}$, $G=K\rtimes L$ where $K$ is abelian
  $\gcd(|K|,|L|)=1$, $\Z(G)< L$, $L/\Z(G)$ is cyclic,
  $G/\Z(G)$ is a Frobenius group with
  kernel $K\Z(G)/\Z(G)$ and $m=|L:\Z(G)|$, $n=|K|$ satisfy $n\equiv 1\pmod m$.
\end{theorem}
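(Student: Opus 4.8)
The plan is to treat the two implications separately; the forward implication (the displayed structure forces $\gcd(m,n)=1$) is a direct computation, while the reverse implication carries the weight.

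For the forward implication, suppose $G=K\rtimes L$ has the stated structure and write $\Gbar=G/\Z(G)$. Since $\gcd(|K|,|L|)=1$ and $\Z(G)\le L$ we have $K\cap\Z(G)=1$, so $\overline{K}:=K\Z(G)/\Z(G)\cong K$ has order $n$ and $\overline{L}:=L/\Z(G)$ has order $m$; moreover $L$ is abelian because $L/\Z(G)$ is cyclic and $\Z(G)\le\Z(L)$. I would then compute the three families of centralizers in $G$ directly. Using that $\overline L$ acts fixed-point-freely on $\overline K$, for $1\ne k\in K$ one gets $C_G(k)=K\times\Z(G)$, so $|k^G|=m$; for $\ell\in L\setminus\Z(G)$ one gets $C_G(\ell)=L$, so $|\ell^G|=n$; and any $g$ with $\overline g\notin\overline K$ is $G$-conjugate into $L$ (every element outside a Frobenius kernel lies in a conjugate of the complement), so $|g^G|=n$. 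Hence $\cs(G)=\{1,m,n\}$, $\gcd(m,n)=\gcd(|K|,|L:\Z(G)|)=1$, and counting $\overline L$-orbits on $\overline K\setminus\{1\}$ gives $m\mid n-1$, i.e.\ $n\equiv1\pmod m$.

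For the reverse implication, the starting point is the elementary but crucial observation that if $x,y\in G$ have coprime class sizes then $G=C_G(x)C_G(y)$ (comparing indices shows $|C_G(x)C_G(y)|\ge|G|$). Applying this to an $x$ with $|x^G|=n$ maximal and a $y$ with $|y^G|=m$, I would first aim to show that these are the only non-central sizes, i.e.\ $\cs(G)=\{1,m,n\}$, and that $\Gbar$ is a Frobenius group; here I would invoke the structural results of~\cite{BGC} and~\cite{DJ} that this theorem strengthens, which provide this quasi-Frobenius skeleton from coprimality of the two largest sizes. Writing $\overline K$ for the Frobenius kernel and $\overline L$ for a complement, the key refinement is then purely arithmetic. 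For $1\ne\overline k\in\overline K$ the fixed-point-free action gives $C_{\Gbar}(\overline k)=C_{\overline K}(\overline k)$, so $|\overline k^{\Gbar}|=|\overline L|\cdot[\overline K:C_{\overline K}(\overline k)]$ is a multiple of $m$; since it lies in $\{1,m,n\}$ and $\gcd(m,n)=1$ with $m>1$ it must equal $m$, forcing $\overline K$ abelian. Dually, for $1\ne\overline\ell\in\overline L$ one has $C_{\Gbar}(\overline\ell)\le\overline L$, so $|\overline\ell^{\Gbar}|$ is a multiple of $n$ and hence equals $n$, forcing $\overline L$ abelian; an abelian Frobenius complement is cyclic, so $\overline L=L/\Z(G)$ is cyclic.

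It remains to descend from $\Gbar$ to $G$. The preimage $N$ of $\overline K$ is normal with $N/\Z(G)$ abelian and $\Z(G)\le\Z(N)$, so $N$ is nilpotent; extracting the Hall $\pi(n)$-subgroup $K$ of $N$ and checking it is normal in $G$ and complemented yields $G=K\rtimes L$ with $K\cong\overline K$ abelian of order $n$, $\Z(G)<L$, $L/\Z(G)$ cyclic, and $G/\Z(G)$ Frobenius with kernel $K\Z(G)/\Z(G)$; the identifications $m=|L:\Z(G)|$, $n=|K|$ and the congruence $n\equiv1\pmod m$ then come from the Frobenius structure exactly as in the forward direction, and $\gcd(|K|,|L|)=1$ follows once one checks $\gcd(n,|\Z(G)|)=1$. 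The main obstacle I anticipate is establishing the three-class-size and Frobenius skeleton directly from coprimality of \emph{only} the two largest sizes (the part handled by~\cite{BGC} and~\cite{DJ}); granting that, the passage to an abelian kernel and cyclic complement, and the clean arithmetic $n\equiv1\pmod m$, is the comparatively soft new content. A secondary technical point is ensuring the Hall subgroup $K$ of $N$ is genuinely $G$-invariant with $\gcd(n,|\Z(G)|)=1$, so that the semidirect decomposition $G=K\rtimes L$ with coprime factor orders actually holds.
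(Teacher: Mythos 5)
Your proposal is correct and follows essentially the same route as the paper: both directions rest on \cite{BGC}*{Corollary~2} (conjugacy rank $2$) together with the Dolfi--Jabara classification \cite{DJ}, from which one extracts the Frobenius quotient $G/\Z(G)$ with abelian kernel and cyclic complement and then verifies the converse by computing class sizes in that Frobenius group. The one step you underestimate is that \cite{DJ}*{Theorem~A} returns four structural types, and the coprimality of $m$ and $n$ must be used to eliminate the three non-Frobenius ones (a short case analysis the paper carries out explicitly); once type (B1) is isolated, $K$ and $L$ are already abelian with $\gcd(|K|,|L|)=1$, so your Hall-subgroup descent and the arithmetic rederivation of abelianness of $\overline{K}$ (which as written presupposes $m=|\overline{L}|$ before establishing it) are not needed.
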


To suppress certain details, it is useful to consider
the set $\textup{cd}^*(G)$
of non-linear character degrees of $G$ and the set
$\cs^*(G)$ of non-central conjugacy class sizes. The former `ignores'
the derived quotient $G/G'$ and the latter  `ignores' the center~$\Z(G)$.

 Note that if $C$ is abelian then
 $\cs(G\times C)=\cs(G)$ and hence $\cs^*(G\times C)=\cs^*(G)$.
 Our main Theorem~\ref{T246} classifies groups $G$ with $\cs^*(G)=\{2,4,6\}$. (This is the
smallest case when $\cs^*(G)=\{a_0,a_0+d,a_0+2d\}$ as  $\cs^*(G)=\{2,3,4\}$
is excluded by Proposition~\ref{T:coprime}.) Our proof of
Theorem~\ref{T246} is both delicate and lengthy.
The case $\cs^*(G)=\{2,4,6,8\}$ was solved in~\cite{BGP}.

\begin{theorem}\label{T246}
  Suppose that $G$ is a finite group with no abelian direct
  factors and $\cs^*(G)=\{2,4,6\}$. Then $G=AB$ where $B\lhdeq G$,
  $|A|=2^\alpha$, $|B|=3$, $|A'|=2$, and $\Z(A)<C_A(B)<A$. Conversely,
  if $G$ has these properties, then $\cs^*(G)=\{2,4,6\}$.
\end{theorem}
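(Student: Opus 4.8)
The plan is to prove the two implications separately, with the forward (classification) direction carrying essentially all of the difficulty. I write $\Z=\Z(G)$ and note that $\cs^*(G)=\{2,4,6\}$ means the class sizes of $G$ are exactly $1,2,4,6$; in particular $2$ and $3$ are the only primes dividing a class size, and $|x^G|_2\le4$, $|x^G|_3\le3$ for all $x$. First I would reduce to a $\{2,3\}$-group: a prime $p$ dividing no class size has a central Sylow $p$-subgroup, which (being a central Hall subgroup) splits off as a direct factor, so the absence of abelian direct factors forces $p\in\{2,3\}$ and $|G|=2^\alpha 3^\beta$. Fix $A\in\mathrm{Syl}_2(G)$ and $Q\in\mathrm{Syl}_3(G)$, and recall $G$ is solvable by Burnside.

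The first, and most substantial, step is to determine the structure. I would show $Q$ is normal of order $3$: restricting $|x^G|_3\le3$ to $x\in Q$ forces $Q$ to have breadth $\le1$, so $Q$ is abelian or $|Q'|=3$, and since $6$ is the only $3$-divisible class size (with $3$-part exactly $3$) one deduces $Q$ is abelian and, by a counting argument on $3$-elements, normal in $G$. The no-abelian-direct-factor hypothesis then gives $Q\cap\Z=1$ (for abelian $Q$ a nontrivial central part would be a direct factor), whence $|Q|=3$; setting $B:=Q$ and using Schur--Zassenhaus gives $G=A\ltimes B=AB$ with $B\lhdeq G$, $|B|=3$, $|A|=2^\alpha$. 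To pin down $A$, note it acts on $B\cong C_3$ through $\Aut(C_3)\cong C_2$, so $C_A(B)$ has index $1$ or $2$; index $1$ would make $B$ a central direct factor, so $C_A(B)<A$. Any $a\in A\setminus C_A(B)$ inverts $B$, hence $C_G(a)=C_A(a)$ and $|a^G|=3\,|a^A|\le6$, giving $|a^A|\le2$; were $A$ abelian this would force $|a^G|=3\notin\cs^*(G)$, so $A$ is non-abelian. The crucial claim $|A'|=2$ -- that \emph{every} $A$-class has size $\le2$ -- requires ruling out an $a\in C_A(B)$ with $|a^A|\ge4$, since such $a$ combined with a nontrivial $\beta\in B$ would give a class of size in $\{4,8\}$ and, pressed further, a forbidden value; granting it, $A'$ is a central involution of $A$ and $\Z(A)<C_A(B)$ records the elements producing the size-$4$ class.

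For the converse I would compute directly in $G=A\ltimes B$. Since $|A'|=2$, every $A$-class has size $1$ or $2$, and for $a\in A$ the coprime formula gives $C_G(a)=C_A(a)\,C_B(a)$. Splitting $G$ into elements of $A$, nontrivial elements of $B$, and mixed elements $a\beta$ with $a\in A$, $\beta\in B\setminus\{1\}$: a nontrivial $\beta$ gives class size $|A:C_A(B)|=2$; any $a\in A\setminus C_A(B)$ inverts $B$, so its coset $aB$ is a single class of size $6$ (absorbing all mixed elements over $A\setminus C_A(B)$); and for $a\in C_A(B)\setminus\Z(A)$ the element $a\beta$ has class size $|A:C_A(a)\cap C_A(B)|$, equal to $2$ if $a$ centralises $C_A(B)$ and $4$ otherwise. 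Thus the hypotheses give $\cs^*(G)\subseteq\{2,4,6\}$ with $2$ and $6$ always present, while a class of size $4$ appears \emph{exactly} when $C_A(B)$ is non-abelian; this last point (equivalently, the genuine existence of a size-$4$ class) is what separates the target $\{2,4,6\}$ from the degenerate $\{2,6\}$ possibilities, and it is enjoyed automatically by the groups produced in the forward direction.

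The main obstacle is the forward reduction, in two parts: forcing the Sylow $3$-subgroup to be normal of order $3$ against the no-abelian-direct-factor hypothesis, and proving $|A'|=2$. The latter -- eliminating $A$-classes of size $\ge4$, which would interact with $B$ to create a class of size $8$ or more -- is the genuinely ``delicate and lengthy'' heart of the argument, and it is precisely what makes the strict chain $\Z(A)<C_A(B)<A$ meaningful rather than formal. I would expect the bookkeeping to proceed by isolating the $2$-elements (whose $G$-class sizes must land in $\{1,2,6\}$) from the mixed elements (which supply the $4$), and then translating the global constraint ``no class size exceeds $6$ and $4$ occurs'' into the twin facts $|A'|=2$ and $C_A(B)$ non-abelian.
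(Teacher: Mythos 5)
Your converse direction is essentially the paper's Lemma~\ref{L:construct} and is fine in outline, but the forward direction has genuine gaps at exactly the three load-bearing points. First, normality of the Sylow $3$-subgroup cannot be dispatched by ``a counting argument on $3$-elements'': the configuration that must be excluded is one with four Sylow $3$-subgroups and $G/\Z(G)\cong A_4$, and every constraint you invoke ($|x^G|\le 6$, $3$-part of class sizes at most $3$) is consistent with that configuration until the centre has been computed exactly. The paper's entire Case~B (Steps B1--B6) is devoted to this: it passes to $G/\Phi(G)=(P\times Q)\rtimes R$, proves $\Z(G)=B\times T_0$ and $G/\Z(G)\cong A_4$, and only then gets a contradiction from $\cs^*(A_4)=\{3,4\}$ forcing a size-$2$ class into $\Z(G)$. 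Second, your route to $|B|=3$ is broken: a nontrivial subgroup $Q\cap\Z(G)$ of an abelian normal $Q$ is not in general a direct factor of $G$ (it need not even be a direct factor of $Q$; think of $C_3<C_9$), and even granting $Q\cap\Z(G)=1$ the conclusion $|Q|=3$ does not follow from class sizes alone --- $Q\cong C_3\times C_3$ with $A$-orbits of sizes $2,2,4$ on $Q\setminus\{1\}$ survives your constraints. The paper's Step~A5 instead proves $B$ cyclic by analysing how a generator transforms under an outer element (manufacturing an abelian direct factor if cyclicity fails), and then bounds $|B|$ via $6\ge|a^G|\ge|[B,a]|=|B|$ for an element $a$ inverting~$B$.

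Third, for $|A'|=2$ you correctly name the difficulty but do not resolve it. Your proposed contradiction stalls immediately: if $a\in C_A(B)$ has $|A:C_A(a)|=4$ with $C_A(a)\le C_A(B)$, then $|(a\beta)^G|=4$ for $\beta\in B\setminus\{1\}$, which is a permitted class size, so the ``forbidden value'' you promise requires an argument you have not supplied. The paper sidesteps classes inside $C_A(B)$ entirely: it shows every $a$ in the index-$2$ subset $A\setminus(A\cap H)$ has centraliser of index $2$ in $A\cap H$, and then applies \cite{BGP}*{Lemma 1.1}, which converts that statement about elements \emph{outside a proper subgroup} into $|A'|=2$. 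So your decomposition of the problem is the right one, but the three steps that carry the proof are either missing or incorrect as stated, and the $A_4$ obstruction in particular is an idea your outline does not contain.
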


The number $n_\alpha$ of groups of order $2^\alpha3$ with
$\cs^*(G)=\{2,4,6\}$ and no non-trivial abelian direct factors increases
quite rapidly with~$\alpha$. For example,
$n_\alpha=4, 16, 46, 104$ when $\alpha=5, 6, 7, 8$; see Remark~\ref{R246}.

Conjugacy classes $x^G$ of prime power size are important.
A beautiful theorem of Kazarin~\cite{K} says if $|x^G|$ is a prime power, then
$\langle x^G\rangle$ is a solvable subgroup of $G$.

In Section~\ref{s:conjsizes} we first prove Theorem~\ref{T:KL}.
Next we prove that, if $\cs(G)$ is an arithmetic progression of length
at least $3$,  then $\cs(G)=\{1,m,n\}$ satisfying the conditions of
Theorem~\ref{T:KL}, and from this we deduce the detailed structure of $G$
(see Proposition~\ref{T:coprime}).
Section~\ref{s:ap} explores how
number theory constrains possible arithmetical
progressions involving precisely two primes, see
Lemma~\ref{L:NT} and Remark~\ref{R6}. Sections~\ref{sec:ex}
and \ref{sec:T246} give the proof of Theorem~\ref{T246}
when $\cs^*(G)=\cs(G)\setminus\{1\}$ equals $\{2,4,6\}$.
Call $r=|\cs^*(G)|$ the \emph{conjugacy rank} of $G$.

We remark that $\cs^*(G)$ can \emph{contain} arbitrarily long
arithmetic progressions. Consider $G_k=C_2\wr C_k$. Since $k,2^{k-1}\in\cs(G_k)$,
we see that $\{1,2,\dots, n\}\subseteq\cs(\prod_{k=1}^n G_k)$.
Also, $\cs(G)$ can equal a \emph{geometric} progressions of arbitrary length
by~\cite{CosseyHawkes}*{Theorem}.
Indeed, given an arbitrary set $S$ of $p$-powers, \cite{CosseyHawkes}*{Theorem}
shows that there is a
$p$-group $G$ of class~$2$ with $\cs(G)=S$.

\section{Conjugacy class sizes}\label{s:conjsizes}

The set $\cs^*(S_n)$ of non-trivial conjugacy class sizes for the symmetric
group~$S_n$ below
\begin{table}[!ht]
  \begin{tabular}{cccccc}
    \toprule
    $n$&$2$&$3$&$4$&$5$&$6$\\
    $\cs^*(S_n)$&$\{\,\}$ &$\{2,3\}$&$\{3,6,8\}$&$\{10,15,20,24,30\}$&$\{15, 40, 45, 90, 120, 144\}$\\\bottomrule
  \end{tabular}
\end{table}
suggests that common divisors of class sizes is important. Indeed, the common divisor graph~\cite{Camina} plays a central role.
Note that the class equation has the form $|G|=\sum_{k \in \cs(G)} m_k k$ where
$m_k$ is the number of classes of $G$ of size $k$.

In this section we study groups $G$ with $\cs(G)=\{1, 1+d,\dots,1+rd\}$. 
It seems remarkable to the authors that, building on~\cite{BGC}, we can classify such $G$ if $r\ge2$.
Before giving our proof, we review some definitions and record some
useful facts.

A group $G$ is a \emph{Frobenius group} if it has a proper subgroup $H$ with
the property that $H\cap H^g=1$ for all $g\in G\setminus H$. Using
character theory, it can be shown that $H$ determines a normal subgroup $K$
satisfying $K\setminus\{1\}=\bigcap_{g\in G}(G\setminus H^g)$.
Observe that $G=KH$. We call $K$ the \emph{Frobenius kernel} and~$H$
the \emph{Frobenius complement} as $H\cap K=1$. The structure of $H$
and $K$ is severely 
constrained~\cite{I}*{Chapter~6}. For example, $K$ is nilpotent and if $|H|$ is even,
then $K$ is abelian. Moreover, the Sylow subgroups of $H$ are cyclic or
generalized quaternion 2-groups~\cite{I}*{Corollary 6.17}. For $x,g\in G$, let $x^g=g^{-1}xg$, $x^G=\{x^g\mid g \in G\}$ and $[x,g]=x^{-1}x^g$.

One way to suppress the role of abelian direct factors is to
focus on the classes of $G\setminus\Z(G)$ and study when
$\cs^*(G):=\cs(G)\setminus\{1\}$ is an arithmetic progression. Results such as
Proposition~\ref{T:coprime} and Theorem~\ref{T:KL}
affirm this decision, and experimental evidence shows that there
is a much richer family of groups for which
$\cs^*(G)$ is an arithmetic
progression rather than $\cs(G)$. Recall that $r=|\cs^*(G)|$ is called
the \emph{conjugacy rank} of $G$.
Groups with $r\le2$ have been well studied.
It\^o proved if $r=1$, then $G$ is nilpotent~\cite{Ito1}, and if
$r=2$, then $G$ is solvable~\cite{Ito2}.  
In addition, he proved in~\cite{Ito3} that if $G$ is simple and $r=3$, then
$G\cong\textup{SL}_2(2^f)$ with $f\ge2$.

We consider conjugacy rank three groups with $\cs^*(G)=\{a_0,a_0+d,a_0+2d\}$. Since
$a_0\ge2$ and $d\ge2$, the smallest example has $\cs^*(G)=\{2,4,6\}$.
(The possibility $\cs(G)=\{1,2,3,4\}$ with $d=1$ does not arise
by Proposition~\ref{T:coprime}.) There are many groups $G$ with
$\cs^*(G)=\{2,4,6\}$ (see Remark~\ref{R246}) and determining their
common features, to show that our necessary conditions are sufficient, was
a challenge.

\begin{proof}[Proof of Theorem~\ref{T:KL}]
  Suppose that $G$ is as in the hypothesis of Theorem~\ref{T:KL}
  (see Section~\ref{s:intro}), and recall the meaning of $m$ and $n$.
  It follows from~\cite{BGC}*{Corollary~2} that $G$ has conjugacy rank~$r=2$,
  so $\cs^*(G)=\{m,n\}$.
  Dolfi and Jabara~\cite{DJ}*{Theorem~A} characterise groups with $r=2$,
  into one of four types called (A), (B1), (B2), and (B3), and the connection
  with the class sizes is given in~\cite{DJ}*{Lemma~3.3}.
  Case (A) does not arise since $\gcd(m,n)=1$. Thus $G=KL$ where $K\lhdeq G$ and
  $\gcd(|K|,|L|)=1$. In case (B2), $L$ is a nonabelian $p$-group,
  $|L:O_p(G)|=p$ and $\cs^*(G)=\{p,|O_p(G):\Z(L)|\,|K|\}$. Since $\gcd(m,n)=1$,
  we must have $O_p(G)=\Z(L)$. Hence $L/\Z(L)\cong C_p$ and $L$ is abelian,
  a contradiction. Thus case (B2) does not occur.
  In case (B3), $\cs^*(G)=\{p^a,p^b|L/L\cap\Z(G)|\}$ by~\cite{DJ}*{Lemma~3.3}. However,
  $a\ge1$ and $b\ge1$ since $p^a=|K:\Z(K)|>1$ and $\cs^*(K)=\{p^b\}$.
  Thus $\gcd(m,n)\ne1$, a contradiction.

  The only remaining possibility is that $G$ has type (B1). In this case
  $K$ and $L$ are abelian, $\Z(G)< L$ and $G/\Z(G)$ is a Frobenius group
  by~\cite{DJ}*{Theorem~A}.
  It follows from the proof of~\cite{DJ}*{Theorem~A} that $K\Z(G)/\Z(G)$ is the
  kernel of $G/\Z(G)$.

  Let  $\overline{\phantom{n}}\colon G\to G/\Z(G)$ be the natural projection,
  let $\Gbar=G/\Z(G)$ and write $\Gbar=\overline{K}\rtimes \overline{L}$.
  By~\cite{DJ}*{Lemma~3.3}, $\cs^*(G)=\{|\overline{K}|,|\overline{L}|\}$
  equals $\{m,n\}$.
  However, $\Gbar$ is a Frobenius group, so for
  $1\ne\overline{k}\in \overline{K}$,
  $C_{\Gbar}(\overline{k})\le \overline{K}$ by~\cite{I}*{Theorem~6.4}.
  This implies $C_{\Gbar}(\overline{k})= \overline{K}$ as $\overline{K}$ is
  abelian. Therefore $1<|\overline{k}^{\Gbar}|\ne|\overline{K}|$, so 
  $|\overline{k}^{\Gbar}|=|\overline{L}|$. Further,
  $|\overline{K}|\equiv 1\pmod {|\overline{L}|}$ by~\cite{I}*{Lemma~6.1},
  so  $m=|\overline{L}|=|L/\Z(G)|$ is less than $n=|\overline{K}|=|K|$
  and $n\equiv 1\pmod m$. Since $\overline{L}$ is abelian, its
  Sylow subgroups are cyclic by~\cite{I}*{Corollary 6.17} and hence
  $L/\Z(G)$ is cyclic.
  This proves one~implication.

  Consider the reverse implication.
  We first prove that $\cs(G)=\cs(\Gbar)$, that is
  $|G:C_G(g)|=|\Gbar:C_{\Gbar}(\overline{g})|$ for all $g\in G$.
  Observe that $L/\Z(G)$ is cyclic implies that $L$ is abelian. 
  Since $\Z(G)\le C_G(g)$ we have $|G:C_G(g)|=|\Gbar:\overline{C_G(g)}|$.
  It suffices to prove that $\overline{C_G(g)}=C_{\Gbar}(\overline{g})$.
  We show $\overline{C_G(g)}\ge C_{\Gbar}(\overline{g})$
  since $\overline{C_G(g)}\le C_{\Gbar}(\overline{g})$ is automatic.
  Consider three cases. (1)~If $g\in\Z(G)$, then
  $\overline{C_G(g)}\le C_{\Gbar}(\overline{g})=\Gbar$.
  (2)~Suppose $g\in K\Z(G)\setminus \Z(G)$, say $g=g_0z$ where $g_0\ne1$.
  Then $C_G(g)=C_G(g_0)\ge K$ (as $K$ is abelian), so
  $\overline{C_G(g)}\ge\overline{K}\ge C_{\Gbar}(\overline{g})$
  by~\cite{I}*{Theorem~6.4(4)}. (3)~If $g\in G\setminus K\Z(G)$, then
  $\overline{g}\not\in\overline{K}$ and so there exists an $x\in G$
  such that $\overline{g}\in\overline{xLx^{-1}}$ as
  $\Gbar$ is a Frobenius group.
  Since $\Z(G)\le xLx^{-1}$, we see that $g\in xLx^{-1}$, that is $g^x\in L$.
  Thus $C_G(g^x)\ge L$  (as $L$ is abelian) and   
  $\overline{C_G(g^{x})}\ge\overline{L}\ge C_{\Gbar}(\overline{g^{x}})$
  by~\cite{I}*{Theorem~6.4(3)}. Hence
  $\overline{C_G(g)}\ge C_{\Gbar}(\overline{g})$.

  We next prove that $\cs(\Gbar)=\{|\overline{K}|,|\overline{L}|\}$.
  View $\Gbar$ as a Frobenius group $\overline{K}\rtimes \overline{L}$ with abelian
  kernel~$\overline{K}$ and abelian complement $\overline{L}$. We show that
  $\cs^*(\overline{K}\rtimes \overline{L})=\{|\overline{K}|,|\overline{L}|\}$.
  Observe that the cosets of $\overline{K}$ in
  $\overline{K}\rtimes \overline{L}$, other than $\overline{L}$,
  form a single conjugacy class~\cite{I}*{p.\,185, 6A.4}, and
  as $\overline{K}$ is abelian the classes
  in $\overline{K}\setminus\{1\}$ all have size $|\overline{L}|$ by~\cite{I}*{Theorem~6.4(4)}. Hence $\cs^*(G)=\cs^*(\Gbar)=\{m,n\}$ where
  $m=|\overline{L}|<n=|\overline{K}|$ satisfy $n\equiv 1\pmod m$. Thus
  $\gcd(m,n)=1$, and the reverse implication holds.
\end{proof}

\begin{proposition}\label{T:coprime}
  Suppose $\cs(G)=\{a_0,a_0+d,\dots,a_0+rd\}$,
  where $a_0,d\ge1$ and $r\ge2$.
  Then $a_0=d=1$, $r=2$, so $\cs(G)=\{1,2,3\}$, and $G\cong G_n\times C$ where
  \begin{equation}\label{E:G}
    G_n=\langle a,b\mid a^{2^n}=b^3=1,\;b^a=b^{-1}\rangle
    \qquad\textup{}
  \end{equation}
  and $C$ is abelian. Clearly $G/\Z(G)\cong \Sym_3$ where $\Z(G)=\langle a^2\rangle\times C$.
  Conversely, $\cs(G_n\times C)=\{1,2,3\}$ for all $n\ge1$ and
  all abelian groups $C$.
\end{proposition}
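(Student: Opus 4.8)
The plan is to first reduce to the coprime situation of Theorem~\ref{T:KL}, pin down the progression, and then recover the exact isomorphism type.

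First I would note that $1\in\cs(G)$ (the identity class), so since $a_0$ is the least element we get $a_0=1$. The two largest class sizes are $m=1+(r-1)d$ and $n=1+rd$; as $n-m=d$ and $m\equiv1\pmod d$ we have $\gcd(m,n)=\gcd(m,d)=1$. Writing $G=G_1\times A$ with $A$ abelian of maximal order, the factor $G_1$ has no non-trivial abelian direct factor and $\cs(G_1)=\cs(G)$. Applying Theorem~\ref{T:KL} to $G_1$ (whose two largest class sizes are the coprime pair $m<n$) yields $\cs(G_1)=\{1,m,n\}$, forcing $r=2$, together with the arithmetic constraint $n\equiv1\pmod m$. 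Since $m=1+d$ and $n=1+2d$, this says $(1+d)\mid(n-1)=2d$, and as $2d\equiv-2\pmod{1+d}$ we get $(1+d)\mid2$; with $d\ge1$ this forces $d=1$. Hence $\cs(G)=\{1,2,3\}$ with $m=2$ and $n=3$.

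Next I would extract the structure of $G_1$ from Theorem~\ref{T:KL}: $G_1=K\rtimes L$ with $K$ abelian of order $n=3$, $L$ abelian with $\gcd(|K|,|L|)=1$, and $|L:\Z(G_1)|=m=2$. Since $\Z(G_1)$ centralises both $K$ and the abelian group $L$, one checks $\Z(G_1)=C_L(K)$, so the conjugation action $\phi\colon L\to\Aut(K)\cong C_2$ is surjective with kernel $\Z(G_1)$ of index $2$. Splitting $L=L_2\times L_{2'}$ into its Sylow $2$-subgroup and Hall $2'$-subgroup, the odd part $L_{2'}$ maps trivially under $\phi$, so $L_{2'}\le\Z(G_1)$ and $G_1=(K\rtimes L_2)\times L_{2'}$; as $G_1$ has no abelian direct factor, $L_{2'}=1$ and $L$ is an abelian $2$-group.

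The hard part will be showing $L$ is cyclic, which identifies $G_1$ with $G_n$. Here I would use that $G_1$ has a non-trivial abelian direct factor if and only if $\ker\phi$ contains a non-trivial direct factor of $L$: a direct factor $A\le\Z(G_1)=\ker\phi$ gives $G_1=(K\rtimes M)\times A$ for a complement $M$ with $\phi(M)=C_2$, and conversely any abelian direct factor lies in $\Z(G_1)$ and (via $G_1'=K$) is seen to be a direct factor of $L$. To see that a non-cyclic $L$ always supplies such a factor, decompose $L=\langle x_1\rangle\times\cdots\times\langle x_t\rangle$ with $t\ge2$ and the orders non-increasing: if some $\phi(x_i)=1$ then $\langle x_i\rangle\le\ker\phi$ is the required factor, while if every $\phi(x_i)=-1$ then $x_1x_2\in\ker\phi$ has maximal order $\exp(L)$ and hence generates a cyclic direct factor of $L$ lying in $\ker\phi$ (using the standard fact that a maximal-order element generates a direct factor of a finite abelian $p$-group). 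Thus the absence of abelian direct factors forces $L$ cyclic, say $L=\langle a\rangle\cong C_{2^n}$ acting on $K=\langle b\rangle$ by inversion, so $G_1\cong G_n$ and $G=G_1\times A\cong G_n\times C$ with $C=A$ abelian.

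Finally, for the converse I would compute directly in $G_n$ that $\Z(G_n)=\langle a^2\rangle$, so $\Z(G)=\langle a^2\rangle\times C$ and $G/\Z(G)\cong\Sym_3$; that $C_{G_n}(b)=\langle b,a^2\rangle$ gives the non-central class size $2$ (for $b,b^{-1}$ and their central translates), and that $C_{G_n}(g)=\langle a^2,g\rangle$ has order $2^n$ for any $g$ involving an odd power of $a$, giving class size $3$. Since abelian direct factors do not change class sizes, $\cs(G_n\times C)=\cs(G_n)=\{1,2,3\}$ for all $n\ge1$ and all abelian $C$.
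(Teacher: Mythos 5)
Your proof is correct. Its first half coincides with the paper's: both pass to a factor $G_1$ with no non-trivial abelian direct factors, note that the top two terms $m=1+(r-1)d$ and $n=1+rd$ are coprime, and use Theorem~\ref{T:KL} to force $n\equiv 1\pmod m$, whence $d=1$, $r=2$ and $\cs(G)=\{1,2,3\}$; the only cosmetic difference is that the paper first cites Corollary~2 of \cite{BGC} to get $r\le 2$ and then invokes Theorem~\ref{T:KL}, while you read $r=2$ directly off the conclusion $\cs(G_1)=\{1,m,n\}$ of Theorem~\ref{T:KL} (legitimate, since that theorem is proved beforehand and independently). You genuinely diverge at the structural identification: the paper simply quotes Theorem~1 of \cite{B92} to conclude that $G_1$ has the form~\eqref{E:G}, whereas you rederive this from the output of Theorem~\ref{T:KL} --- namely $G_1=K\rtimes L$ with $K\cong C_3$, $L$ abelian and $|L:C_L(K)|=2$ --- by observing that the odd part of $L$ would be a central direct factor, and that a non-cyclic abelian $2$-group $L$ always contains a non-trivial direct factor inside the index-$2$ kernel of $L\to\Aut(K)$ (either a generator already acting trivially, or the product of two inverting generators, which has maximal order and hence splits off). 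Your route is self-contained modulo Theorem~\ref{T:KL} and makes explicit why the absence of abelian direct factors forces $L$ to be cyclic; the paper's citation is shorter but delegates that content to \cite{B92}. The converse computations agree, and yours is in fact slightly more careful: class size $3$ occurs exactly for elements involving an \emph{odd} power of $a$, whereas the paper's condition ``$i\ne 0$'' should be read as ``$i$ odd''.
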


\begin{proof}
  Now $1\in G$ implies $a_0=1$. Hence consecutive terms of
  $\cs^*(G)=\{1+d,\dots,1+rd\}$ are
  coprime.   It follows from \cite{BGC}*{Corollary~2, p.\,260} that $r\le2$,
  and hence $r=2$. Thus $\cs(G)=\{1,1+d,1+2d\}$.
  Suppose that $a,b\in G$ where $|a^G|=1+d$ and $|b^G|=1+2d$.
  By~\cite{BGC}*{Theorem, p.\,255}, $G=NH$ where $N=C_G(a)$, $H=C_G(b)$
  are abelian, $H\cap N=\Z(G)$, and $G/\Z(G)$ is a Frobenius group with
  kernel $N/\Z(G)$ and complement $H\Z(G)/\Z(G)$.
  Since $\cs(X\times C)=\cs(X)$ for all abelian $C$, we may assume that
  $G$ has no abelian direct factors, so Theorem~\ref{T:KL} applies.
  However, $1+2d\equiv 1 \mod(1+d)$ holds by Theorem~\ref{T:KL}.
  Thus $2d=k(1+d)$ for some integer $k$, so $k=1$ and $d=1$.
  Therefore $\cs(G)=\{1,2,3\}$ and the structure of $G$ is determined
  by \cite{B92}*{Theorem 1}. Paraphrasing this result, $G$ has the
  form~\eqref{E:G}.

  Conversely, the elements of $G=G_n\times C$, see~\eqref{E:G}, have a normal
  form $a^ib^jc$ where $0\le i<2^n$, $0\le j<3$ and $c\in C$. 
  A simple calculation shows that $|(a^ib^jc)^G|=3$ if $i\ne 0$, 
  $|(b^jc)^G|=2$ if $j\ne0$, and $|c^G|=1$. Thus $\cs(G)=\{1,2,3\}$.
  Let $N=\langle a^2\rangle\times C$.
  Since $G/N\cong\Sym_3$ and $\Z(\Sym_3)=1$, an easy calculation shows that
  $\Z(G)=N$.
\end{proof}

Having deduced that $d=1$ in the above proof, we could
also invoke~\cite{B92}*{Theorem~2} with $p^b=3$. However, \cite{B92}*{Theorem 1}
required less work.

\section{Arithmetic progressions involving two primes}\label{s:ap}

In Section~\ref{s:conjsizes} we saw that group theory strongly constrains
when $\cs(G)$ can be an arithmetical progression. This section explores the
extent to which number theory alone imposes constraints.

Most research concerning arithmetical progressions and primes falls in two
main areas. The first concerns sets of primes containing arbitrarily long
arithmetic progressions~\cite{GT}.
The second concerns quantifying the distribution of smooth numbers in arithmetic
progressions, for example~\cite{BP}. (A positive integer is called
\emph{$y$-smooth} if all its prime factors are at most $y$.) This section
is motivated by the latter.

We say that an arithmetical progression $a_0,a_1,\dots,a_r$ \emph{involves} at most
two primes if $|\bigcup_{i=0}^r \pi(a_i)|\le2$ where
$\pi(a_i)$ denotes the set of prime divisors of $a_i$.

Let $a_i=a_0+id$
for $i=0,1,\dots,k$ and set $\delta:=\gcd(a_0,d)$. For all $i\ge1$ we have,
\[
  \gcd(a_{i-1},a_i)=\delta\quad\textup{and}\quad
  2a_i=a_{i-1}+a_{i+1}.
\]
An arithmetic progression is called \emph{primitive} if $\delta=\gcd(a_0,d)=1$.

Our Lemma~\ref{L:NT} relies on an easy number-theoretic lemma
of John Thompson.

\begin{lemma}\label{T:M}\textup{\cite{T}*{Lemma~3}}
  Let $p$ be an odd prime.
  Then the only solutions to $p^m=2^n\pm1$ have $m=1$ and $p$ a Fermat
  or Mersenne prime, or $3^2=2^3+1$.
\end{lemma}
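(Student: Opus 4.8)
The plan is to prove Lemma~\ref{T:M}, which characterises all solutions to the equation $p^m = 2^n \pm 1$ for an odd prime $p$. I would treat the two signs separately, and in each case first dispose of the small exponents by hand before invoking deeper machinery for the general case. The cleanest organising principle is that both cases reduce, after trivial checks, to celebrated results on $2$-adic valuations and on Catalan-type equations, so the main work is setting up the reductions correctly.

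\medskip

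First I would handle the Mersenne case $p^m = 2^n - 1$. Here $p^m$ is forced to be odd (automatic) and one reads off immediately that $p^m \equiv -1 \pmod{?}$ constraints. The key observation is that if $m \ge 2$, then $2^n \equiv 1 \pmod{p}$, and I would analyse the multiplicative order of $2$ modulo $p$ together with a lifting-the-exponent argument to control $v_p(2^n-1)$. The cleaner route, however, is to note that $p^m = 2^n-1$ with $m \ge 2$ is an instance of the Catalan/Mih\u{a}ilescu equation $x^p - y^q = 1$ (rearranged as $2^n - p^m = 1$), whose only solution in integers exceeding $1$ is $3^2 - 2^3 = 1$. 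Since $2^n-1$ with $n\ge 2$ gives $x=2$, $y=p$, this forces either $m=1$ (so $p$ is a Mersenne prime $2^n-1$) or the sporadic solution, which does not satisfy $p^m=2^n-1$ for odd $p$. Thus in the minus case $m=1$ and $p$ is a Mersenne prime.

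\medskip

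Next I would treat the plus case $p^m = 2^n + 1$. If $m=1$, then $p = 2^n+1$ is by definition a Fermat prime, so I may assume $m \ge 2$. Applying Mih\u{a}ilescu's theorem to the rearrangement $p^m - 2^n = 1$, the only solution with both exponents at least $2$ is $3^2 - 2^3 = 1$, giving exactly $3^2 = 2^3+1$; and if $n=1$ then $p^m = 3$ forces $m=1$, contradicting $m \ge 2$. This recovers the single sporadic solution listed in the statement. Since the paper cites this as Thompson's \cite{T}*{Lemma~3}, an alternative elementary proof avoiding Mih\u{a}ilescu is presumably intended, using only congruences modulo $8$ and a descent on $n$: writing $p^m - 1 = 2^n$ or $p^m+1 = 2^n$ and factoring via $p^m \mp 1 = (p\mp 1)(p^{m-1}+\cdots)$ to bound the $2$-adic valuation, one shows the second factor is odd and greater than $1$ once $m \ge 2$, a contradiction except in the flagged case.

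\medskip

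I expect the main obstacle to be the plus case with $m \ge 2$ if one insists on a fully elementary argument: the factorisation $p^m+1$ behaves differently according to the parity of $m$, and ruling out $m$ even requires observing $p^m \equiv 1 \pmod{8}$ so $2^n = p^m+1 \equiv 2 \pmod 8$ forcing $n=1$, while $m$ odd requires the sharper step that $p+1 \mid p^m+1$ with odd cofactor. Getting the elementary descent to terminate cleanly, rather than simply quoting Mih\u{a}ilescu, is where care is needed; since the result is cited verbatim from \cite{T}, the honest proof here is simply to invoke Thompson's lemma, and I would present the above reductions only as the conceptual skeleton.
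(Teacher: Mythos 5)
The paper gives no proof of this lemma at all: it is quoted verbatim as Thompson's \cite{T}*{Lemma~3}, so your closing remark that the honest move is simply to invoke Thompson is precisely what the authors do. Your primary argument via Mih\u{a}ilescu's theorem is nonetheless correct and complete --- in each sign case $m=1$ yields a Mersenne or Fermat prime by definition, $n=1$ is trivial, and for $m,n\ge 2$ Catalan's equation leaves only $3^2-2^3=1$, realisable here only as $3^2=2^3+1$ --- though it is an anachronistically heavy tool for a 1959 lemma. The one substantive flaw is that your elementary ``skeleton'' in the last paragraph has the two signs interchanged. The factorisation $p^m+1=(p+1)(p^{m-1}-p^{m-2}+\cdots+1)$ with odd cofactor, together with the observation that $m$ even gives $p^m\equiv 1\pmod 8$ and hence $2^n=p^m+1\equiv 2\pmod 8$, is the argument for the \emph{minus} case $p^m=2^n-1$ (where there are indeed no solutions with $m\ge 2$). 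In the \emph{plus} case $p^m=2^n+1$ one must instead factor $p^m-1$: for odd $m\ge 3$ the cofactor $p^{m-1}+\cdots+1$ is an odd integer exceeding $1$, while for even $m$ one writes $2^n=(p^{m/2}-1)(p^{m/2}+1)$ as a product of two powers of $2$ differing by $2$, forcing $p^{m/2}=3$ and producing the sporadic solution; the congruence $p^m\equiv 1\pmod 8$ alone gives $2^n\equiv 0\pmod 8$ there and forces nothing. As written, your sketch would both fail to terminate where you expect and would look for the exceptional solution $3^2=2^3+1$ in the wrong branch; since you fall back on citing \cite{T} for the actual proof, this does not invalidate the proposal, but the elementary route should be repaired before being presented as the intended argument.
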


\begin{lemma}\label{L:NT}
  Suppose $k\ge2$ and $(a_0, a_1, \dots ,a_k)$ is a primitive arithmetic
  progression involving at most two primes and $1\le a_0<a_1$. If $k\ge3$, the sequence
  must be $(1,2,3,4)$. If $k=2$, then $a_0\in\{1,2\}$ and the sequence involves
  precisely two primes, say $p$ and~$q$. Moreover, $(a_0, a_1, a_2)$ equals
  one of the following:
  \begin{enumerate}[{\rm (i)}]
  \item $(1,2^\alpha,2^{\alpha+1}-1)$ where $2^{\alpha+1}-1$ is a Mersenne
    prime, or
  \item $(1,p^\alpha,q^\beta)$ where $p>2$, $q^\beta\equiv1\pmod 4$
    and $1+q^\beta=2p^\alpha$, or
  \item $(2,q, 2^{\alpha+1})$ where $q=2^\alpha+1$ is a
    Fermat prime, or
  \item $(2,3^2,2^4)$.
  \end{enumerate}
\end{lemma}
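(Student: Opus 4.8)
The plan is to exploit the two structural facts recorded just before the statement: primitivity gives $\gcd(a_{i-1},a_i)=\delta=1$, so consecutive terms are coprime, and $2a_i=a_{i-1}+a_{i+1}$ links any three consecutive terms. First I would show each $a_i$ is a prime power. Since $\pi(a_i)\subseteq\{p,q\}$ for two fixed primes and consecutive terms are coprime, a term divisible by both $p$ and $q$ would force a neighbour (one always exists since $k\ge2$) to equal $1$; as the sequence is strictly increasing with $a_0\ge1$, only $a_0$ can be $1$, and a one-line check rules that out. The same reasoning shows a single prime cannot suffice, so for $k\ge2$ exactly two primes occur and consecutive terms are powers of distinct primes.

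Next I would dispose of the case where \emph{both} primes are odd. Then every term is odd, and applying $2a_i=a_{i-1}+a_{i+1}$ to an interior term $a_i=p^s$ gives $2p^s=q^u+q^v$; coprimality forces the smaller neighbour to be $1$, hence $a_0=1$ and $i=1$. Thus $k=2$, $a_0=1$, and $2a_1=1+a_2$. Reducing mod $4$ (as $a_1$ is an odd prime power) gives $a_2\equiv1\pmod 4$, which is precisely case~(ii); no further reduction is available, which is why (ii) is stated with general exponents.

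So from now on one prime is $2$. If $d$ were even then $a_0$ would be odd and all terms odd, so $2$ could not occur; hence $d$ is odd and parities strictly alternate, each odd term being a power of the odd prime $p$ (or $1$) and each even term a power of $2$. Two elementary middle-term computations drive everything. For an interior odd term $a_i=p^t$ the relation reads $2p^t=2^u+2^v$ with $u\ne v$, and writing $p^t=2^{u-1}+2^{v-1}$ and comparing parities forces the smaller neighbour to equal $2$. For an interior even term $a_i=2^s$ the relation reads $2^{s+1}=a_{i-1}+a_{i+1}$ with odd neighbours, and divisibility by $p$ forces the smaller neighbour to equal $1$, whence $a_0=1$ and $i=1$. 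Applying these to $a_1$ (interior because $k\ge2$) gives $a_0\in\{1,2\}$: if $a_1$ is even then $a_0=1$, and if $a_1$ is odd then $a_0=2$.

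Finally I would split on $a_0$. If $a_0=2$, then $a_1=p^{t_1}$ and $a_2=2^{s_2}$ give $p^{t_1}=2^{s_2-1}+1$; the even-middle computation applied to $a_2$ shows $k\ge3$ is impossible (it would force $a_1=1$), so $k=2$, and Lemma~\ref{T:M} applied to $p^{t_1}=2^{s_2-1}+1$ yields either a Fermat prime (case~(iii)) or the exceptional $3^2=2^3+1$ (case~(iv)). If $a_0=1$ with $2$ a prime, then $a_1=2^{s_1}$, $a_2=p^{t_2}$ give $p^{t_2}=2^{s_1+1}-1$; for $k=2$, Lemma~\ref{T:M} applied to this Mersenne equation forces a Mersenne prime (case~(i)), while for $k\ge3$ the odd-middle computation applied to $a_2$ forces $a_1=2$, hence $d=1$ and the sequence begins $1,2,3,4$, which cannot extend since $a_4=5$ introduces a third prime. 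Collecting the cases gives the statement. The bulk of the work is bookkeeping parities and coprimalities in these middle-term relations; the one genuinely external input is Thompson's Lemma~\ref{T:M}, which is exactly what converts the $2^n\pm1$ equations into the listed Fermat/Mersenne conditions and isolates the single exception $(2,3^2,2^4)$.
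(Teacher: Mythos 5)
Your argument is correct and follows essentially the same route as the paper's: both proofs reduce, via coprimality of consecutive terms and the relation $2a_1=a_0+a_2$, to the same Diophantine equations $1+q^\beta=2p^\alpha$, $q^\beta=2^{\alpha}+1$ and $q^\beta=2^{\alpha+1}-1$, which Thompson's Lemma~\ref{T:M} then resolves. The only difference is organisational: you split by parity (both primes odd versus one prime equal to $2$) and use a reusable ``interior term'' observation to force $a_0\in\{1,2\}$ and to settle $k\ge3$ directly, whereas the paper splits on $a_0\in\{1,2\}$ or $a_0\ge3$ and deduces the $k\ge3$ case from the completed $k=2$ classification.
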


\begin{proof}
  We first classify the arithmetic sequences $(a_0,a_1,a_2)$ with three
  terms.  Write $a_i=p^{\alpha_i}q^{\beta_i}$ for $i=0,1,2$. Let $d=a_1-a_0$.
  Since $\delta=1$, we have
  \[
  \min\{\alpha_0,\alpha_1\}=\min\{\alpha_1,\alpha_2\}=
  \min\{\beta_0,\beta_1\}=\min\{\beta_1,\beta_2\}=0.
  \]

  {\sc Case $a_0=1$.} Here $\alpha_0=\beta_0=0$. Since $a_1\ge2$, one of
  $\alpha_1$ or $\beta_1$ is positive. Interchanging $p$ and $q$ if necessary,
  assume that $\alpha_1>0$. This forces $\alpha_2=0$, so $a_2=q^{\beta_2}$.
  Hence in turn $\beta_2>0$, so $\beta_1=0$ and
  $(a_0,a_1,a_2)=(1,p^{\alpha_1},q^{\beta_2})$. Thus $1+q^{\beta_2}=2p^{\alpha_1}$
  and so $q$ is odd. If $p>2$, then $1+q^{\beta_2}\equiv 2\pmod4$ shows~(ii) holds.
  If $p=2$, then $1+q^{\beta_2}=2^{\alpha_1+1}$ implies by Thompson's Lemma
  that $\beta_2=1$, and hence $q=2^{\alpha_1+1}-1$ is a Mersenne prime
  (so $\alpha_1+1$ must be prime). This is case~(i).

  {\sc Case $a_0=2$.} Take $p=2$. Thus $\alpha_0=1$ and $\beta_0=0$. Therefore
  the arithmetical sequence
  $(a_0,a_1,a_2)=(2,2^0q^{\beta_1},2^{\alpha_2})$ satisfies
  $2+2^{\alpha_2}=2q^{\beta_1}$, that is $1+2^{\alpha_2-1}=q^{\beta_1}$.
  If $\alpha_2-1\ge2$ and $\beta_1\ge2$, then
  this equation is $1+2^3=3^2$ by Thompson's Lemma. Hence
  $(a_0,a_1,a_2)=(2,3^2,2^4)$ and case~(iv) holds. Suppose now that
  $\alpha_2-1\in\{0,1\}$. Then $1+2^{\alpha_2-1}$ equals $2$ or~$3$. However,
  $q\ne p$, so the only possibility is $(a_0,a_1,a_2)=(2,3,2^2)$ and
  case~(iii) holds. Finally,
  suppose that $\beta_1=1$. Then $1+2^{\alpha_2-1}=q$ is a Fermat prime.
  This is case~(iii), and $\alpha_2-1$ is a power of $2$. A specific
  instance is $(a_0,a_1,a_2)=(2,3,2^2)$ which extends to $(1,2,3,4)$.

  {\sc Case $a_0\ge3$.} It is not possible that $\alpha_0>0$ and
  $\beta_0>0$. Otherwise $\alpha_1=\beta_1=0$, so $a_1=1$ and
  $3\le a_0\le a_1=1$, a contradiction.
  Hence one of $\alpha_0$ and $\beta_0$ is zero. Swapping $p$ and $q$
  if necessary, we may assume that $\beta_0=0$. Arguing as above,
  we have $(a_0,a_1,a_2)=(p^{\alpha_0},q^{\beta_1},p^{\alpha_2})$
  where $p^{\alpha_0}+p^{\alpha_2}=2q^{\beta_1}$. Since $0<\alpha_0<\alpha_2$,
  $p^{\alpha_0}$ divides the left-side. Since $p\ne q$ this implies that
  $p=2$. However $a_0\ge3$ shows $\alpha_0\ge2$ and so $4\mid 2q^{\beta_1}$,
  a contradiction. Thus this case never occurs.

  We have now classified the arithmetic progressions with precisely
  three terms involving
  at most two primes. If $(a_0,a_1,a_2,a_3)$ involves at most two primes,
  then it follows from parts (i)--(iv) that $(a_0,a_1,a_2)=(1,2,3)$ and
  hence $(a_0,a_1,a_2,a_3)=(1,2,3,4)$. Finally, $k\le3$ as $(1,2,3,4,5)$
  involves more than two primes.
\end{proof}

\begin{remark}\label{R6}
  (a)~The primitive arithmetic progressions with first term
  $a_0=1$ are therefore
  the sequences in (i) and (ii) and $(1,2,3,4)$. The work of~\cite{BGC}
  excludes $(1,2,3,4)$ from occurring as $\cs(G)$, and Proposition~\ref{T:coprime}
  shows that only $(1,2,3)$ arises. Thus Lemma~\ref{L:NT} illustrates the
  limitations of using number theory only.
  \vskip1mm
  (b)~Dividing each term of a non-primitive arithmetic progression
  by $\delta=\gcd(a_0,d)$ gives a primitive one. Thus
  all non-primitive arithmetic progressions $(a_0, a_1, a_2)$ involving
  distinct primes $p,q$ can be classified using Lemma~\ref{L:NT} by multiplying
  by $\delta=p^\alpha q^\beta>1$. We now consider the
  non-primitive arithmetic progression $(2,4,6)$.
\end{remark}

\section{Examples of groups with \texorpdfstring{$\cs^*(G)=\{2,4,6\}$}{}}
\label{sec:ex}

In this section we consider groups of the form $3.A$ where $A$ is nilpotent.
In particular, we assume that $G=AB$ satisfies:
\begin{enumerate}[{\rm (i)}]
  \item $|A|$ is a power of 2;
  \item $C_2\cong A'$;
  \item $C_3\cong B\lhdeq G$; and
  \item $\Z(A)< C_A(B)<A$.
\end{enumerate}

\begin{remark}\label{R246}
We used {\sc Magma}~\cite{Bosma} to find many groups $G$, with
no (non-trivial) abelian direct factors, satisfying (i)--(iv).
Our {\sc Magma} program found that there are 170 such groups whose
order divides $2^8\cdot 3$.
\end{remark}

\begin{lemma}\label{L:construct}
  If $G=AB$ satisfies \textup{(i)--(iv)} above, then
  $\cs^*(G)=\{2,4,6\}$.
\end{lemma}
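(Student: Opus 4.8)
The plan is to use the coprimality $\gcd(|A|,|B|)=1$ to realise $G$ as a semidirect product $G=B\rtimes A$ (here $A\cap B=1$ and $|A||B|=|G|$), and then to compute every centralizer $C_G(g)$ by a single explicit conjugation formula. First I would record the structural consequences of (i)--(iv). Since $B\cong C_3$ is normal and $\Aut(C_3)\cong C_2$, conjugation gives a homomorphism $A\to\Aut(B)$ with kernel $C_A(B)$; as $C_A(B)<A$ by (iv), this map is onto, so $|A:C_A(B)|=2$. Write $\epsilon(x)=1$ if $x\in C_A(B)$ and $\epsilon(x)=-1$ otherwise, so that $x\beta x^{-1}=\beta^{\epsilon(x)}$ for all $\beta\in B$. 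Because $A'$ is normal of order $2$ it is central, so $A'\le\Z(A)$; hence for every $a\in A$ the class $a^A$ lies in the coset $aA'$ and $|a^A|\in\{1,2\}$. Finally (iv) gives $\Z(A)<C_A(B)$, so every element of $\Z(A)$ centralizes $B$, and in particular $a\notin C_A(B)\Rightarrow a\notin\Z(A)\Rightarrow|a^A|=2$.

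The computational heart is one conjugation identity. Writing a general element as $g=\beta a$ with $\beta\in B$, $a\in A$, and conjugating by $h=\gamma x$ (with $\gamma\in B$, $x\in A$), one computes, writing ${}^{x}a:=xax^{-1}$,
\[
  hgh^{-1}=\bigl(\beta^{\epsilon(x)}\gamma^{\,1-\epsilon(a)}\bigr)\bigl({}^{x}a\bigr).
\]
Thus the $A$-part of any conjugate ranges exactly over $a^A$, while for fixed $x$ the $B$-part equals $\beta^{\epsilon(x)}$ when $a\in C_A(B)$ (independent of $\gamma$) and, when $a\notin C_A(B)$, ranges over all of $B$ as $\gamma$ varies. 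From this identity I would first read off $\Z(G)=\Z(A)$, since no $g$ with $\beta\ne1$ can be fixed by all $h$; this confirms that $\cs^*(G)$ is precisely the set of class sizes exceeding $1$.

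The class sizes then fall out case by case, each reduced to counting solutions of the displayed equation. If $a\notin C_A(B)$ then $|g^G|=|B|\cdot|a^A|=3\cdot2=6$. If $a\in\Z(A)$ then $g$ is central for $\beta=1$ and $|g^G|=2$ for $\beta\ne1$. If $\beta=1$ and $a\in C_A(B)\setminus\Z(A)$ then $|g^G|=|a^A|=2$. The only remaining case is $\beta\ne1$ with $a\in C_A(B)\setminus\Z(A)$: here $C_G(g)=B\bigl(C_A(a)\cap C_A(B)\bigr)$, so $|g^G|=2$ when $C_A(a)=C_A(B)$ and $|g^G|=4$ when $C_A(a)\ne C_A(B)$. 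Hence $\cs^*(G)\subseteq\{2,4,6\}$, and the values $2$ and $6$ always occur (take $\beta\in B\setminus\{1\}$, $a=1$ for $2$, and any $a\in A\setminus C_A(B)$, $\beta=1$ for $6$).

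The main obstacle is to show that the value $4$ is genuinely attained, i.e.\ that there exists $a\in C_A(B)\setminus\Z(A)$ with $C_A(a)\ne C_A(B)$; equivalently, that $C_A(B)$ is non-abelian. (If $C_A(B)$ were abelian, then every such $a$ would satisfy $C_A(B)\le C_A(a)$, forcing $C_A(a)=C_A(B)$ and leaving only $\cs^*(G)=\{2,6\}$; so this step is truly essential, not cosmetic.) This is exactly where I expect the hypotheses (i)--(iv) to be used in full force, combining $|A'|=2$, the index $|A:C_A(B)|=2$, and the strict inclusion $\Z(A)<C_A(B)$ to locate a non-central element $a\in C_A(B)$ whose $A$-centralizer is a proper subgroup of $C_A(B)$. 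Once such an $a$ is produced, choosing any $\beta\in B\setminus\{1\}$ gives an element $g=\beta a$ with $|g^G|=4$, which together with the previous paragraph yields $\cs^*(G)=\{2,4,6\}$.
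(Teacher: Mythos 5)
Your setup and case analysis are correct and, up to the last step, more careful than the paper's own argument: the conjugation identity, the identification $\Z(G)=\Z(A)$, and the conclusion that $\cs^*(G)\subseteq\{2,4,6\}$ with $2$ and $6$ always attained all check out. The paper instead writes $(ab)^{a'b'}=a[a,a'][a,b']b^{a'}$, bounds class sizes via $|[a,A]|$, $|[a,B]|$ and $|b^A|$, and for the value $4$ simply asserts that $|(aw)^G|=4$ for any $a\in C_A(B)\setminus\Z(A)$ and $w$ a generator of $B$. Your computation exposes what that assertion really needs: since $|(aw)^G|=|A:C_A(a)\cap C_A(B)|$ and both $C_A(a)$ and $C_A(B)$ have index $2$, the class has size $4$ exactly when $C_A(a)\ne C_A(B)$, i.e.\ when $C_A(B)$ is nonabelian. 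You flag this as the ``main obstacle'' and do not resolve it, so as written your proposal is incomplete.

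The missing step cannot, however, be supplied, because hypotheses (i)--(iv) do not force $C_A(B)$ to be nonabelian. Take $G=D_{24}=\langle t,s\mid t^{12}=s^2=1,\ sts=t^{-1}\rangle$ with $B=\langle t^4\rangle\cong C_3$ normal and $A=\langle t^3,s\rangle\cong D_8$ a Sylow $2$-subgroup. Then $A'=\langle t^6\rangle\cong C_2$ and $\Z(A)=\langle t^6\rangle<C_A(B)=\langle t^3\rangle<A$, so (i)--(iv) hold and $G$ has no abelian direct factor, yet $\cs^*(D_{24})=\{2,6\}$: for $a=t^3$ one has $C_A(a)=C_A(B)=\langle t^3\rangle$, and indeed $(t^3t^4)^G=(t^7)^G=\{t^7,t^5\}$ has size $2$, not $4$. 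So your analysis has in fact isolated a genuine error in the lemma (and hence in the converse direction of Theorem~\ref{T246}): the paper's class-size count for $aw$ treats the two index-$2$ subgroups $C_A(a)$ and $C_A(B)$ as if they were automatically distinct. Condition (iv) needs to be strengthened to require some $a\in C_A(B)$ with $C_A(B)\not\le C_A(a)$, equivalently that $C_A(B)$ be nonabelian, at which point your argument closes the proof.
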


\begin{proof}
  Each element of $G$ can be written uniquely as $ab$ where
  $a\in A$ and $b\in B$. Fix $a$ and $b$, and consider the conjugacy class
  $(ab)^G=\{(ab)^{a'b'}\mid a'\in A,\, b'\in B\}$:
  \[
    (ab)^{a'b'}=a^{a'b'}b^{a'b'}=a[a,a'b']b^{a'b'}=a[a,b'][a,a']^{b'}b^{a'b'}.
  \]
  As $A'$ is normal in the 2-group $A$ and $|A'|=2$, we have $A'\le\Z(A)$.
  Further, as $C_A(B)<A$ and $B\cong C_3$, $[A,B]=B$. Also,
  $[a,a']\in A'\le\Z(A)\le C_A(B)$ so
  this expression can be simplified as follows:
  \begin{equation}\label{E:ab}
  (ab)^{a'b'}=a[a,b'][a,a']b^{a'}=a[a,a'][a,b']b^{a'}\quad
  \textup{ where $a[a,a']\in A, [a,b']b^{a'}\in B$}.
  \end{equation}
  Suppose that $a'\in A$ and $b'\in B$ vary. Then we have
  \def\k{\kern-1.5pt}
  \[
    [a,A]\k=\k\begin{cases}\{1\}&\textup{if $a\k\in\k \Z(A)$,}\\
                       A'&\textup{otherwise,}\end{cases}\\\;
    [a,B]\k=\k\begin{cases}\{1\}&\textup{if $a\k\in\k C_A(B)$,}\\
                       B&\textup{otherwise,}\end{cases}\;
    b^A\k=\k\begin{cases}\{1\}&\textup{if $b=1$,}\\
                     \{b,b^2\}&\textup{otherwise.}\end{cases}
  \]
  The cardinalities of $[a,A]$, $[a,B]$ and $b^A$ are 1,2 or 1,3 or 1,2.
  This shows that the size of a conjugacy class lies in~$\{1,2,3,4,6,12\}$.
  Since $\{[a,b']b^{a'}\mid b'\in B, a'\in A\}$ equals $B$ when
  $a\not\in C_A(B)$, there are no classes of size 12.  Observe that
  $[a,B]=B$ precisely when $a\not\in C_A(B)$ and in this case
  $a\not\in \Z(A)$ so $[a,A]=A'$. This shows that a class size of 3 is also
  not possible. Thus $\cs(G)\subseteq\{1,2,4,6\}$

  Conversely, we show that class sizes 2, 4, 6 do arise.
  Let $B=\langle w\rangle$. Now $A$ acts non-trivially on $B$ since
  $C_A(B)<A$. Thus $|w^G|=2$. Choose $a\in A\setminus C_A(B)$.
  Then $|[a,B]|=3$. As $a\not\in\Z(A)$, we see that $|[A,a]|=2$.
  Thus $|a^G|$ is divisible by~6, so $|a^G|=6$. Finally, $|(aw)^G|=4$ for
  $a\in C_A(B)\setminus\Z(A)\ne\emptyset$.
\end{proof}

In Theorem~\ref{T246} we prove the converse of Lemma~\ref{L:construct},
that is, we prove that a group~$G$
satisfying $\cs^*(G)=\{2,4,6\}$ must satisfy conditions (i)--(iv) above.

\section{Proof of Theorem~\ref{T246}}\label{sec:T246}

Lemma~\ref{L:construct} gives a class of groups $G$
with $\cs^*(G)=\{2,4,6\}$. This is the easy part of the proof of
Theorem~\ref{T246}. In this section, we give a detailed proof that
these are the only examples. The following lemma
paraphrases \cite{CH}*{Proposition 4}.

\begin{lemma}
  Suppose that $p$ is a prime divisor of $|G|$
  and $\cs^*(G)=\{n_1,\dots,n_r\}$. Then
  $p\nmid n_1\cdots n_r$ if and only if a Sylow $p$-subgroup
  of~$G$ is central.
\end{lemma}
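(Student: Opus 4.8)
The plan is to restate the condition in terms of centralizer indices and then separate an easy implication from a hard one. Since every central class has size $1$ and $p$ is prime, the hypothesis $p\nmid n_1\cdots n_r$ is equivalent to $p\nmid|x^G|$ for \emph{every} $x\in G$; that is, $|G:C_G(x)|$ is coprime to $p$, equivalently $C_G(x)$ contains a full Sylow $p$-subgroup of $G$, for all $x$. This reformulation makes one direction immediate: if some (hence the unique) Sylow $p$-subgroup $P$ satisfies $P\le\Z(G)$, then $P\le C_G(x)$ for all $x$, so $|x^G|=|G:C_G(x)|$ divides $|G:P|$, which is coprime to $p$; thus $p$ divides no class size, and in particular $p\nmid n_1\cdots n_r$.

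For the reverse implication I would assume $p\nmid|x^G|$ for all $x\in G$ and fix $P\in\mathrm{Syl}_p(G)$. First, $P$ is abelian: for $x\in P$ the index $|P:C_P(x)|$ is a power of $p$ dividing the $p'$-number $|x^G|$, so $C_P(x)=P$, and as $x$ is arbitrary $P=\Z(P)$. The strategy is then to prove $[P,G]=1$ by induction on $|G|$. The crucial observation is that once one knows $[P,G]\le\Z(G)$, the proof finishes cleanly: for $x\in P$ the map $g\mapsto[x,g]$ is then a homomorphism $G\to\Z(G)$ with kernel $C_G(x)$, so its image has order $|x^G|$ coprime to $p$; but each $[x,g]=x^{-1}x^g$ is a product of the two commuting $p$-elements $x^{-1}$ and $x^g$, so the image is a $p$-group, hence trivial, forcing $x\in\Z(G)$.

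To feed the induction I would quotient by a nontrivial central $p$-subgroup. The hypothesis passes to $G/M$ for any $M\trianglelefteq G$, since centralizer indices can only shrink modulo $M$; so if $p\mid|\Z(G)|$ one takes $1\ne M\le\Z(G)$ a $p$-group, applies induction to $G/M$ to obtain $\overline P\le\Z(G/M)$, i.e. $[P,G]\le M\le\Z(G)$, and concludes as above. Hence the whole problem reduces to the crux: \emph{showing the hypothesis forces $p\mid|\Z(G)|$}, equivalently $O_p(G)\ne1$. If a minimal normal subgroup $N$ happens to be a $p$-group, this is automatic, for each class $x^G\subseteq N$ has size that is both a power of $p$ and coprime to $p$, hence equal to $1$, so $N\le\Z(G)$.

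The main obstacle is the remaining configuration $O_p(G)=1$, which I expect to need genuine local-to-global input. Here I would exploit the abelianness of $P$ together with the global hypothesis to establish $N_G(P)=C_G(P)$, invoke Burnside's normal $p$-complement theorem to write $G=K\rtimes P$ with $K=O_{p'}(G)$, and then note that since every $C_G(y)$ with $y\in K$ contains a Sylow $p$-subgroup and all such subgroups are $K$-conjugate to $P$, each $y\in K$ is $K$-conjugate into $C_K(P)$; as a finite group is never the union of conjugates of a proper subgroup, $C_K(P)=K$, whence $[P,K]=1$ and $P\le\Z(G)$, contradicting $O_p(G)=1$ when $p\mid|G|$. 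Thus $O_p(G)\ne1$ always holds and the induction goes through. The delicate step, and the one I expect to be hardest, is precisely bridging the local data (abelian Sylow, every centralizer meeting a full Sylow) to the global equality $N_G(P)=C_G(P)$; the rest is bookkeeping.
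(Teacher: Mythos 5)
The paper offers no proof of this lemma at all---it is quoted as a paraphrase of \cite{CH}*{Proposition 4}---so the comparison below is with your argument on its own terms. Your easy direction is correct, as is the reduction ``$[P,G]\le \Z(G)$ implies $P\le \Z(G)$'' via the commutator homomorphism $g\mapsto[x,g]$. But the hard direction has two genuine gaps. First, the assertion that a class $x^G$ contained in a minimal normal $p$-subgroup $N$ ``has size that is a power of $p$'' is false: $|x^G|=|G:C_G(x)|$ is computed in $G$, not in $N$ (an involution in the normal four-group of $\Sym_4$ has class size $3$, not a power of $2$). This step is load-bearing in your architecture: it is your only bridge from $O_p(G)\ne 1$ to $p\mid|\Z(G)|$, and without it the induction never gets started. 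Second, the equality $N_G(P)=C_G(P)$, which you need in order to invoke Burnside's normal $p$-complement theorem, is asserted but not proved---you yourself flag it as the hardest step. (It is in fact provable from your hypothesis: for a $p'$-element $x\in N_G(P)$ the index $|P\la x\ra:C_{P\la x\ra}(x)|$ divides $|x^G|$ and hence is prime to $p$, so $C_{P\la x\ra}(x)$ contains the unique Sylow $p$-subgroup $P$ of $P\la x\ra$; but no such argument appears in your write-up.)

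The frustrating part is that the one decisive and fully correct idea in your final paragraph---every centraliser contains a full Sylow $p$-subgroup, all Sylow $p$-subgroups are conjugate, and a finite group is never the union of the conjugates of a proper subgroup---already proves the entire lemma when applied to $G$ rather than to the $p$-complement $K$. Indeed, the hypothesis says that for every $g\in G$ there is some $h$ with $P^h\le C_G(g)$, that is, $g\in C_G(P^h)=C_G(P)^h$; hence $G=\bigcup_{h\in G}C_G(P)^h$, which forces $C_G(P)=G$, i.e.\ $P\le\Z(G)$. This makes the abelianness of $P$, the induction on $|G|$, the case division on $O_p(G)$, and Burnside's theorem all unnecessary. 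I would discard the scaffolding and keep only that three-line argument.
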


Thus it follows from Burnside's $p$-complement theorem that
$p\mid G$ and $p\nmid n_1\cdots n_r$ implies $G$ has a non-trivial
abelian direct factor. We henceforth assume that $G$ has no non-trivial
abelian direct factor: clearly $\cs(G)=\cs(G\times A)$ for $A$ abelian.
Thus for us, the prime divisors of $n_1\cdots n_r$ coincide with the prime
divisors of~$G$.

\begin{proof}[Proof of Theorem~\ref{T246}]
  Let $G$ be a finite group with $\cs^*(G)=\{2,4,6\}$ and no abelian
  direct factors. By the preceding argument, $G$ is a $\{2,3\}$-group.
  Since $3\not\in\cs(G)$, it follows that~$G$ is not nilpotent, so $F(G)<G$.

  A result of Gasch\"utz~\cite{Huppert}*{Satz III.4.5} says that
  $F(G)/\Phi(G)$ is a direct product of abelian minimal normal subgroups
  of $G/\Phi(G)$. Hence the group $\Gbar := G/\Phi(G)$
  may be written as $(P\times Q)\rtimes R$ where $F=F(G)/\Phi(G)$
  has Sylow $2$-subgroup $P$, Sylow $3$-subgroup $Q$, and both
  are elementary abelian and $F=P\times Q$. Now $\Gbar/F$ acts
  faithfully on $F$ as $F(G)/\Phi(G)=F$ and
  $C_{\Gbar}(F)\le F$ by~\cite{Huppert}*{III\;Satz~4.2}.
  Hence $R$ acts linearly (perhaps not faithfully)
  and completely reducibly on both $P$ and $Q$.

  Our argument is similar in parts to~\cite{BGP}*{pp.\,4--6} although
  our notation differs. Since $\Gbar:=G/\Phi(G)$, $\Phi(\Gbar)$ is trivial.
  We will write $\Gbar=F\rtimes R$ where
  $F=P\times Q$ are elementary abelian Sylow $2$- and
  Sylow $3$-subgroups of $F$. 
  As $\Gbar$-conjugacy class sizes are divisors of
  the $G$-conjugacy class sizes, we have
  $\cs^*(\Gbar)\subseteq\{2,3,4,6\}$.

We split the proof into two cases depending on how $R$ acts on $Q$.

\smallskip\noindent
{\bf Case A.} $R$ acts non-trivially on $Q$.

\smallskip
{\bf Step A1.} \emph{There exists a $2$-element $x\in R_2\setminus C_{R_2}(Q)$, for $R_2$ a Sylow $2$-subgroup of $R$, such that $U:= [Q,\langle x\rangle]$ has order $3$ and is inverted by $x$, and $Q=C_Q(x)\times U$.}

As $R$ acts non-trivially on $Q$, $C_R(Q)$ is a proper normal subgroup
of $R$. To prove the existence of a suitable element $x$, let $C_R(Q) < L
\lhdeq R$ such that $L/C_R(Q)$ is a minimal normal subgroup of
$R/C_R(Q)$. Since $R$ is completely reducible on $Q$ (regarded as a
vector space over $\mathbb{F}_3$), it follows that $L/C_R(Q)$ is an
elementary abelian $2$-group. Choose $x\in L\setminus C_R(Q)$. Replacing
$x$ by an odd power of itself, we can (and will) assume that $x$ is a
$2$-element. By definition, $x$ acts non-trivially on $Q$ and it lies in
some Sylow $2$-subgroup $R_2$ of $R$. Thus $x\in R_2\setminus
C_{R_2}(Q)$.

As $x$ acts non-trivially on $Q$, we have $U=[Q,\langle x\rangle]\ne 1$ and
$C_Q(x)\ne Q$. Also $Q=C_Q(x)\times U$ since $x$ has order coprime to~$3$.
The $\Gbar$-conjugacy class size of $x$,
namely $|x^{\Gbar}|=|\Gbar:C_{\Gbar}(x)|$, is
divisible by $|Q:C_Q(x)|=|U|=3^u$ for some $u\geq 1$.  However, as
$|\Gbar:C_{\Gbar}(x)|$ is a divisor of one of $2,4,6$, it follows that
$|\Gbar:C_{\Gbar}(x)|=|Q:C_Q(x)|=|U|=3^u=3$. Thus $U$ has order~$3$
and is inverted by $x$.

\smallskip
{\bf Step A2.} \emph{Let $x$ be as in Step~\textup{A1}. Then $R/C_R(Q)=\la x C_R(Q)\ra\cong C_2$. Further, there is a normal subgroup $H$ of $G$ containing $\Phi(G)$ such that $G/H\cong \Sym_3$, and $\Hbar := H/\Phi(G) = (P \times C_Q(x)).C_R(Q)$.}

It follows from Step A1 that $x$ induces a linear transformation
of~$Q$ with determinant $-1$, and the same holds for all
$y\in R_2\setminus C_{R_2}(Q)$. In particular, $y^2\in C_{R_2}(Q)$ for all
such $y$, and hence $R_2/C_{R_2}(Q)$ is an elementary abelian
$2$-group. Further the product $xy$ of two such elements must have
determinant 1, and so $xy$ must lie in $C_{R_2}(Q)$. This
implies that $R_2/C_{R_2}(Q)\cong C_2$.

Now $R_2C_R(Q)/C_R(Q)\cong R_2/C_{R_2}(Q)\cong C_2$, and hence a Sylow
$2$-subgroup of $R/C_R(Q)$ has order 2. Thus $L/C_R(Q)\cong C_2$ (the
minimal normal subgroup in the proof of Step A1) and lies in all Sylow
$2$-subgroups of $R/C_R(Q)$, so $L=R_2C_R(Q)$. This holds for all
minimal normal subgroups of $R/C_R(Q)$ and hence $L/C_R(Q)$ is the
unique minimal normal subgroup of $R/C_R(Q)$. Since $R/C_R(Q)$ is a
$\{2,3\}$-group it follows that $R/C_R(Q)=\langle xC_R(Q)\rangle\cong C_2$.

Since each of $C_Q(x)$ and $U=[Q,\langle x\rangle]$ is invariant under
$F, C_R(Q)$ and $x$, it follows that $C_Q(x)$ and $U$ are normal
subgroups of $\Gbar$. Also $C_R(Q)=C_R(U)$ is centralised by $Q$ and~$R$,
and hence by $\Hbar = (P \times C_Q(x)).C_R(Q)\lhdeq \Gbar$. The
quotient is generated by $U\Hbar/\Hbar\cong U$ and $x\Hbar$, and so is
nonabelian of order 6. Let $H$ be the full preimage of $\Hbar$.
Then $G/H\cong\Gbar/\Hbar\cong \Sym_3$ as claimed.

\smallskip
{\bf Step A3.} \emph{Let $\pi:G\rightarrow \Sym_3$ be the natural projection
  with kernel $H$ as in Step~\textup{A2}. Let
  $T:=\{ g\in G \mid \pi(g)\textup{ has order~$2$}\}$.
  Then $\Z(G)=C_H(T)\leq H$,  $H/\Z(G)$ is an elementary abelian
  $2$-group, and $H\ne \Z(G)$. Also $|H:C_H(g)|=2$ for each $g\in T$.}

Let $a\in T$, so $Ha=\pi(a)$ has order $2$ in $G/H\cong\Sym_3$. Then
$Ha$ lies in an $\Sym_3$-conjugacy class of size $3$, so $3$ divides
the class size $|a^G|$. Since $\cs(G)=\{2,4,6\}$ it follows that
$|a^G|=6$ and hence $|H:C_H(a)|=2$. The natural map $H\rightarrow
\prod_{a\in T} H/C_H(a)$ which sends each $h\in H$ to the $|T|$-tuple
with $a$-entry $C_H(a)h$ is a group homomorphism from $H$ to an
elementary abelian $2$-group with kernel $C_H(T)$. In particular
$H/C_H(T)$ is an elementary abelian $2$-group.

We now show that $C_H(T)\leq \Z(G)$. Let $g\in G$. We show that
$C_H(T)$ centralises~$g$. (i)~If $\pi(g)$ has order $1$, then $g\in H$.
Thus for $a\in T$, we have $ga, a^{-1}\in T$, so $C_H(T)$
centralises $ga$ and $a^{-1}$ and hence also $gaa^{-1}=g$.  (ii)~If
$\pi(g)$ has order $2$, then $g\in T$ and by definition, $g$
centralises $C_H(T)$. (iii)~If $\pi(g)$ has order $3$ then
$\pi(g)=\pi(a)\pi(b)$ for two elements $\pi(a),\pi(b)$ of $\Sym_3$ or
order $2$. Thus $g=hab$ for some $h\in H$, $\pi(ha)=\pi(a)$, and so
$ha, b\in T$. Therefore $C_H(T)$ centralises $ha$ and $b$ and hence also
centralises $hab=g$. This proves that $C_H(T)\leq \Z(G)$.

Since $\Sym_3$ has trivial centre it follows that $\Z(G)\leq H$. If $g\in \Z(G)$, then we just showed that $g\in H$, and since $g$ is in $\Z(G)$ it must in particular centralise $T$, so $g\in C_H(T)$. Thus $\Z(G)=C_H(T)$. By the first paragraph of this argument, $H$ does not centralise the element $a$, and so $H\ne \Z(G)$.

\smallskip
{\bf Step A4.} \emph{Let $A$ be a Sylow $2$-subgroup of $G$, and let $B$ be a Sylow $3$-subgroup of $G$. Then $B$ is abelian and normal in $G$. Moreover $\Z(G)=(A\cap \Z(G))\times (B\cap H)$, and $F(G)=(A\cap H)\times B$ has index $2$ in $G$. Also  $A$ is nonabelian and $A'\cong C_2$.}
  
 By Step A3, $H$ is an extension of an abelian subgroup $\Z(G)$ by an
 abelian group $H/\Z(G)$, and hence $H$ is nilpotent so $H\leq
 F(G)$. Then since $G/H\cong \Sym_3$ is not nilpotent, $F(G)$ has
 index 6 or 2 in $G$.
   That is, $F(G)$ equals $H$ or $N$ where $N$ is the
   normal subgroup of $G$ satisfying $F(G)\le N$ and $|G:N|=2$.
   By Step~A3 the Sylow 3-subgroup $H_3$ of $H$ lies in $\Z(G)$. If $F(G)=H$,
   then $\overline{F(G)}=\overline{H}$ has Sylow 3-subgroup
   $Q=\overline{H_3}\le\overline{\Z(G)}\le\Z(\overline{G})$.
   This contradicts Step~A1. Hence
   $F(G)=N$ and the Sylow $3$-subgroup $B$ of $G$ lies in $F(G)$.
   Since $B \leq N$ and $N=F(G)$, it follows that $B$ is the unique
   Sylow 3-subgroup of $G$, and $B$ is normal in $G$. Moreover,
   $B\cap H$ has index $3$ in $B$, and by Step A3, $B\cap H\leq \Z(G)$.
   Thus $B$ is an extension of a central subgroup $B\cap H$ by a cyclic group,
   and hence $B$ is abelian.

Recall that $A$ is a Sylow $2$-subgroup of $G$.  We have  $F(G)=(A\cap H)\times B$ with $A\cap H$ of index $2$ in $A$. Hence $H=(A\cap H)\times (B\cap H)$, and since $B\cap H\leq \Z(G)< H$, we have $\Z(G) = (A\cap \Z(G))\times (B\cap H)$.  
It was shown in the proof of Step A3 that, each $a\in A\setminus(A\cap H)$
has $|H:C_H(a)|=2$. Thus $a\not\in \Z(A)$ so $A$ is nonabelian and
$\Z(A)\leq A\cap H$. Now each $a\in A\setminus (A\cap H)$ lies in the set $T$ of Step A3. 
Since $H=(A\cap H)\times (B\cap H)$ and $B\cap H$ is central, it follows that $C_{A\cap H}(a)$ has index $2$ in $A\cap H$. Hence $A'\cong C_2$ by~\cite{BGP}*{Lemma 1.1}.

\smallskip
{\bf Step A5.} \emph{Using the notation of Step~\textup{A4}, $G=AB$ where 
 $A$ is a $2$-group, $B\lhdeq G$ has order $3$,
   $|A'|=2$, and $\Z(A)<C_A(B)<A$.}

  First we observe that, for $a, g\in G$, $a^g=a[a,g]$, and hence the
  conjugacy class $a^G$ equals $a[a,G]$, so $|a^G|=|[a,G]|\leq 6$. 

By Step A4, $B\lhdeq G$, $B$ is abelian, and $B\cap H\leq \Z(G)$. We
show that $B$ is cyclic.  Suppose to the contrary that $B$ has
rank~$s>1$. By the theory of $\mathbb{Z}$-modules there exist
decompositions $B=C_{n_1}\times C_{n_2}\times\cdots\times C_{n_s}$ and
$B\cap H=C_{n_1/3}\times M$ where $M=C_{n_2}\times\cdots\times
C_{n_s}\ne 1$. Then each element $h\in B$ can be written uniquely as
$h_1^km$ where $C_{n_1}=\langle h_1\rangle$ and $m\in M\le\Z(G)$.
Choose $g\in G\setminus F(G)$. Since $B\lhdeq G$, the image
$(h_1)^g\in B$ and hence $(h_1)^g=h_1^km$ for some (unique)
non-negative $k<|h_1|$ and $m\in M$. If $m=1$, then as $G=\langle
F(G), g\rangle$ and $F(G)$ centralises $B$ (by Step A4), it follows
that $\langle h_1\rangle \lhdeq G$, and hence $G=(A\langle
h_1\rangle)\times M$ has a non-trivial abelian direct factor. This is a
contradiction. Hence $m\ne1$. Now $g^2\in F(G)$ (since $|G:F(G)|=2$),
and since $B$ is abelian and $M\leq \Z(G)$, we have
\[
  h_1=(h_1)^{g^2}=(h_1^km)^{g}= (h_1^g)^k m^g = (h_1^km)^k m=
    h_1^{k^2}m^{k+1}.
\] 
Thus $k^2\equiv 1\pmod{n_1}$ and $m^{k+1}=1$. Since $m$ is a
non-trivial $3$-element it follows that $3$ divides $k+1$, and
therefore $3$ does not divide $k-1$. However $n_1\geq3$ is a power of $3$
that divides
$k^2-1$, and hence $n_1$ divides $k+1$.  Since $0\leq k<n_1$ this
implies that $k=n_1-1$ and $(h_1)^g=h_1^{-1}m$ with $m^{n_1}=1$.  Thus
$[h_1,g]=h_1^{-1}h_1^g=h_1^{-2}m$, and similarly, for each $\ell$,
\[
[h_1^\ell,g]=h_1^{-\ell}(h_1^\ell)^g=h_1^{-\ell}(h_1^g)^\ell=h_1^{-\ell}(h_1^{-1}m)^\ell=
h_1^{-2\ell}m^\ell
\]
and since $|h_1|=n_1$ is coprime to $2$, it follows that $|[\langle
h_1\rangle,g]|\geq n_1$.  However, as we observed above,
$|g^G|=|[G,g]|\geq |[\langle h_1\rangle,g]|\geq n_1$, and since
$|g^G|\leq 6$ and $n_1$ is a power of $3$, we conclude that
$n_1=3$. Also, since $m^{n_1}=1$ we have $|h_1|=|m|=3$ and
$(h_1)^g=h_1^2m$. Now the set $\{h_1,h_1^{2}m\}$ is invariant under
$g$ and is centralised by $F(G)$, and so is $G$-invariant. Hence
$B_0:=\langle h_1, m\rangle \cong C_3\times C_3$ is normal in $G$.
Now $B_0$ has exactly four subgroups of order $3$, and the two
subgroups $\langle m\rangle, \langle h_1 m\rangle$ are $G$-invariant
(recall that $m\in \Z(G)$). Therefore we may replace $C_{n_1}$ by
$\langle h_1m\rangle$ in the decomposition for $B$, and since
$(h_1m)^g=h_1^2m^2=(h_1m)^2$, we obtain $G=(A\langle
h_1m\rangle)\times M$ with a non-trivial abelian direct factor, which
is a contradiction. Thus we have proved that $B$ has one
generator, that is, $B$ is cyclic, say $B=\langle b\rangle$.

We now prove $|B|=3$. Recall that $G=AB$ where $B\lhdeq G$ is a
cyclic $3$-subgroup and $A$ is a Sylow $2$-subgroup with $|A'|=2$ by
Step~A4.  By Step A3, there exists $a\in A\cap H$ such that $a$
inverts $bH$, and since $B\lhdeq G$, this means that $b^a=b^k\ne b$.
Since $a^2\in A\cap F(G)=A\cap H$ centralises $B$, we have
$k^2\equiv 1\mod {|B|}$.  Since $\Aut(B)$ is cyclic of order
$2|B|/3$, it has a unique involution whence $k\equiv -1\mod{|B|}$.
Thus $[b,a]=b^{-1}b^a=b^{k-1}=b^{-2}$.  Arguing as in the
previous paragraph we have $6\geq |a^G|=|[G,a]|\geq |[B,a]|\geq|B|$,
and hence $|B|=3$ as desired.

  Finally, we show that $\Z(A)<C_A(B)$. If $\Z(A)=C_A(B)$, then it follows
  from the proof of Lemma~\ref{L:construct} that $G$ has no conjugacy
  classes of size $4$, a contradiction. (Using the notation of
  Lemma~\ref{L:construct}, if $|(ab)^G|=4$, then $|[a,B]|=1$ so
  $a\in C_B(A)=\Z(A)$ and hence $|[a,A]|=1$, so $|(ab)^G|\le 2$.) 
  
  This completes the proof of Step A4, and hence shows that $G$ has the
  required properties of Theorem~\ref{T246}. This completes the proof of Case~A.

\smallskip\noindent
{\bf Case B.} $R$ acts trivially on $Q$.

Since $G$ is not nilpotent, $R$ acts non-trivially on $P$.
As $R$ acts completely reducibility on $P$, it acts non-trivially on some
irreducible subspace $V$ of $P$.

\smallskip
{\bf Step B1.} \emph{Let $V$ be an irreducible subspace of $P$ on which
  $R$ acts non-trivially. Then $|V|=4$ and $R/C_R(V)\cong C_3$.}

We show first that $|V|=4$. As $|P|$ is a power of 2, 
some $R$-orbit on non-zero elements of $V$ has odd size greater than 1.
However $\cs(G)=\{2,4,6\}$ implies that this orbit has size $3$,
say $\{a,b,c\}$. Using multiplicative notation, the product $abc$ is then fixed by $R$, and hence $abc=1$.
By minimality, $V=\la a,b,c\ra = \la a,b\ra$ and hence $|V|=4$. Now
$R/C_R(V)\le \Aut(V)=\textup{GL}_2(2)\cong\Sym_3$, and since $R$ is
irreducible on $V$,  $R/C_R(V)$ equals $C_3$ or~$\Sym_3$.

We will show $R/C_R(V)=C_3$. Suppose not. Then $R/C_R(V)=\Sym_3$.
Let $U$ an $R$-invariant complement to $V$ in $P$, so $P=U\times V$ by complete
reducibility and $U\lhdeq \Gbar$. 
Observe that $F=P\times Q$,  and so
\[
FC_R(V)/((U\times Q)C_R(V))\cong F/(U\times Q)\cong (P\times Q)/(U\times Q)\cong P/U \cong V
\]
is normal in  $\Gbar/((U\times Q)C_R(V))$. The quotient is isomorphic to $\Gbar/FC_R(V)\cong R/C_R(V)\cong \Sym_3$.
Moreover, $\Gbar/((U\times Q)C_R(V))\cong  V\rtimes\Sym_3\cong\Sym_4$.
However, the $3$-cycles of $\Sym_4$ form a conjugacy class of size $8$,
and this conjugacy class size must divide some $G$-conjugacy class size.
This is a contradiction since   $\cs(G)=\{2,4,6\}$. Hence $R/C_R(V)\cong C_3$
as claimed.

\smallskip
{\bf Step B2.} \emph{Suppose that $V$ is as in Step B1. Then $R\cong C_3$,
 $F=\Z(\Gbar)\times V$, and $\Gbar/\Z(\Gbar)\cong V\rtimes R\cong A_4$.}

Recall that $R$ centralises $Q$. By Step B1, $R/C_R(V)\cong C_3$ for each non-central minimal normal subgroup $V$ of $F$. Since $R$ is faithful on $F$, it follows that $R$ is an elementary abelian $3$-group. Let $1\ne x\in R$. Then $C_{\Gbar}(x)$ contains $R$ and $Q$, and so 
\[
  |\Gbar:C_{\Gbar}(x)| = |\Gbar:RC_F(x)| = |F:C_F(x)|=|P:C_{P}(x)|
\]
which is a power of $2$. Since $x\ne 1$ and $R$ is faithful on $P$,
$x$ acts non-trivially on some minimal normal subgroup $V$ of
$P$. Using the multiplicative notation of Step~B1, we may write
$V\setminus\{1\}$ as $\{a,b,c\}$ and assume that $a^x=b$, $b^x=c$,
$c^x=a$. It is straightforward to show that the elements $x$, $x^a$, $x^b$,
$x^c$ are pairwise distinct: for example, if $x^a=x^b$ then $x$
centralises $ab=c\in V$, which is not the case. Hence the
$\Gbar$-class of $x$ has size at least 4, and since the size is a
$2$-power, it must be $4$. Thus $P = C_P(x)\times V$.  
Since, for any given $g\in \Gbar$,  $x^g=x^h$
for some $h\in P$, we have $C_P(x)^g=C_{P^g}(x^g)=C_P(x^h)$. Now $y\in
C_P(x^h)$ if and only if $yh^{-1}xh=h^{-1}xhy$, and since $h, y$ commute (because $P$ is abelian) this is equivalent to $yx=xy$, that is, $y\in C_P(x)$. Thus  $C_P(x^h)=C_P(x)$, and hence
$C_P(x)\lhdeq \Gbar$.

We now show that $R$ acts trivially on $C_P(x)$.  Suppose not.  Then
$R$ acts non-trivially on $C_P(x)$, so there exists a non-central
minimal normal subgroup $W$ of $\Gbar$ contained in~$C_P(x)$.  If
$C_R(V)=C_R(W)$ then $\la C_R(V),x\ra=R$ would centralise $W$, which
is not the case, so $C_R(V), C_R(W)$ are distinct proper subgroups of
$R$ and hence there exists $y\in R\setminus(C_R(V)\cup C_R(W))$. This
means that $y$ acts non-trivially on both $V$ and $W$, and hence
$|F:C_F(y)|\geq |VW|=16$, implying that the $\Gbar$-conjugacy class
size of $y$ is at least 16, contradiction.  Hence $R$ centralises
$C_P(x)$, and since $R$ centralises $Q$ and acts faithfully on $F$,
it follows that $R\cong C_3$ and $C_P(x)=C_P(R)=C_P(\Gbar)$. This implies
that $C_F(\Gbar)=\Z(\Gbar)$ has index 4 in $F$, and $F=\Z(\Gbar)\times V$.
Hence $\Gbar/\Z(\Gbar)\cong V\rtimes R\cong A_4$.  This proves Step~B2.

\smallskip
We now define some more notation. 
\begin{itemize}[leftmargin=\parindent]
\item[(1)] 
Step~B2 implies that $|G:F(G)|=3$. Hence the unique Sylow $2$-subgroup $S$ of $F(G)$ is the unique Sylow $2$-subgroup of $G$, and $\overline{S}=P$. 
Further, by Step~B2,  $G$ has four Sylow 
$3$-subgroups; let $T$ be one of them, and suppose without loss of generality that $\overline{T}=Q\times R$. Then $G=S\rtimes T$, and $T_0:=T\cap F(G)$ is the unique
Sylow $3$-subgroup of $F(G)$ with $|T:T_0|=3$.  Thus $F(G)=S\times T_0$, $T_0=C_T(S)$, and $\overline{T_0}=Q$.
\item[(2)] By Step B2, $\Z(\Gbar)=U\times Q$ where $U=\Z(\Gbar)\cap P$, so $\overline{S}=P=U\times V$, and $C_F(R)=U\times Q$ (recall $F=\overline{F(G)} =P\times Q$).
\item[(3)] Let $\pi$ be the composition of the natural projection $\pi_0:G\to \Gbar$ and the projection $\pi_1:\Gbar\to \Gbar/(U\times Q)$.  Let $M:=\Ker(\pi)$ and 
$B:=\Ker(\pi)\cap S$. Since $S\lhdeq G$, both $M$ and $B$ are normal in $G$. Also $G/M\cong \pi(G)= \Gbar/(U\times Q) \cong V\rtimes R\cong A_4$, and $S/B\cong\pi(S)=\pi_1(P)\cong V\cong (C_2)^2$, and $\pi(T)\cong \pi_1(T)/Q\cong R\cong C_3$. In particular $T$ permutes cyclically the three non-trivial elements of $S/B$. 
\item[(4)] In fact $T\cap \Ker(\pi)=T_0$ so $M=B\times T_0\leq F(G)$. We let $H:=BT$ so 
$|G:H|=|ST:BT|=|S:B|=4$ and $|H:M|=3$.
\end{itemize}

\smallskip
{\bf Step B3.} \emph{If $x\in H\setminus M$, then $C_G(x)=H$ and $|x^G|=4$.}


Let $x\in H\setminus M$, so $x=bt$ for unique $b\in B, t\in T\setminus T_0$. Suppose that $y\in C_G(x)$. Thus $[y,x]=1$ and $y=sr$ for unique
$s\in S, r\in T$. Since $[sr,bt]=1$, computing modulo the normal subgroup $B$
shows that $[sr,t]\in B$. However,
\[
[sr,t] = r^{-1}s^{-1}t^{-1}srt = r^{-1}[s,t] t^{-1}rt = [s,t]^r[r,t].
\]
Further, $[s,t]^r\in S$ (since $S\lhdeq G$) and $[r,t]\in T$. Since $[sr,t]\in B\subseteq S$ it follows that $[r,t]=1$, and hence that $[s,t]^r\in B$ which implies that $[s,t]\in B$ (since $B\lhdeq G$). 

We claim that $s\in B$. If not, then $s\in S\setminus B$, and since $t\in T\setminus T_0$, the element $t$ maps the non-trivial coset $sB$ to $(sB)^t = s^tB\ne sB$, implying that $s^{-1}s^t = [s,t]\not\in B$, a contradiction. Thus $s\in B$, and hence $y=sr\in BT=H$.  This means that $|x^G|=|G:C_G(x)|$ is divisible by $|G:H|=4$, and since $\cs(G)=\{2,4,6\}$ we conclude that $|x^G|=4$ and $H=C_G(x)$.

\smallskip
{\bf Step B4.} \emph{With the above notation, $H=B\times T$ is abelian, and $B=C_S(T)$. Further $S=[S,T]\circ B$ is a central product, and $B\leq \Z(S)$.}

By Step B3, $H$ centralises each element of $H\setminus M$. Let $h\in M$ and $x\in
H\setminus M$. Then $xh\in H\setminus M$ so $H$ centralises both
$x^{-1}$ and $xh$, and hence $H$ also centralises
$x^{-1}(xh)=h$. Thus $H$ is abelian, and hence $H=B\times
T$, with $B, T$ abelian. Next, since $T$ acts fixed point freely on
$S/B$ and centralises $B$, it follows that $B=C_S(T)$.

Now $S=[S,T]C_S(T)$ by~\cite{I}*{Lemma~4.28}, and since $B=C_S(T)$ we have
$S=[S,T]B$. Using the Three-Subgroup Lemma~\cite{I}*{Lemma~4.9}, since $[[T,B],S]=[1,S]=1$ and $[[B,S],T]\leq [B,T]=1$, we conclude that $[[S,T],B]=1$. Thus $S=[S,T]\circ B$ is a central product, and as $B$ is abelian, this implies $B\leq \Z(S)$.

\smallskip
{\bf Step B5.} \emph{Set $Z:=[S,T]\cap B$. Then $[S,T]/Z\cong S/B\cong  (C_2)^2$,
$S'=[S,T]'\cong C_2$, $\Z(S)=B$, $\Z([S,T])=Z$, and
$\Z(G)=B\times T_0=M$.}

By definition, since $Z\leq B$ it follows from Step B4 that $Z$ centralises $[S,T]$, and since also $B$ is abelian, we have $Z\leq \Z(S)\leq \Z([S,T])$. Also, by Step B4 and (3) above,
\[
[S,T]/Z =[S,T]/([S,T]\cap B)\cong ([S,T]B)/B = S/B \cong C_2^2.
\]
Let $a\in [S,T]\setminus Z$ (or more generally let $a\in S\setminus
B$) and $x\in T\setminus T_0$. Then $x$ acts fixed point freely on
$S/B\cong [S,T]/Z$ so, under the homomorphism $\pi:G\rightarrow A_4$
defined in (3) above, $\pi(a)$ is an involution in the fours group $\pi(S)$  of
$A_4$, and $\pi(x)$ acts fixed-point freely on $\pi(S)$. Hence
$\pi(a)$ lies in an $A_4$-conjugacy class of size $3$, and so $|a^G|$
is divisible by $3$ and hence is equal to $6$, since
$\cs(G)=\{2,4,6\}$. It follows that $C_G(a)\leq F(G) =S\times T_0$,
and hence $C_G(a)=C_{F(G)}(a)=C_S(a)\times T_0$ with $C_S(a)$ of
index $2$ in $S$. By Step B4, $B\leq \Z(S)$, and so $C_S(a) = \la B, a\ra$,
of index $2$ in $S$. In
particular $C_S(a)$ does not contain $[S,T]$. Thus $[S,T]$ is
nonabelian, and for each $a\in [S,T]\setminus Z$, $C_{[S,T]}(a)$ has
index $2$ in $[S,T]$; also if $a\in S\setminus B$ then $C_S(a)$ has
index $2$ in $S$. Applying~\cite{BGP}*{Lemma 1.1} to the nonabelian group
$[S,T]$ with proper subgroup $Z$, and to the nonabelian group $S$ with
proper subgroup $B$, we conclude that $|[S,T]'|=|S'|=2$. Thus
$S'=[S,T]'\cong C_2$.

We have just shown that no element of $[S,T]\setminus Z$ is central in
$[S,T]$, and hence $\Z([S,T])\subseteq Z$. We proved the reverse inclusion 
above, and hence $\Z([S,T])=Z$.  An analogous argument shows that $\Z(S)=B$. Finally,
$B$ centralises both $S$ and $T$ by Step B4, so $B\leq \Z(G)$.
Also $T_0=C_T(S)$ centralises both $S$ and $T$ (since $T$ is abelian by Step B4), so $T_0\leq\Z(G)$.
Hence $M=B\times T_0\leq \Z(G)$ and equality holds since
$G/M\cong A_4$ has trivial centre.

\smallskip
{\bf Step B6.}
We can now eliminate Case~B. Since $\cs^*(G/\Z(G))=\cs^*(A_4)=\{3,4\}$, it follows that any conjugacy class of $G$ of size~2
is contained in $\Z(G)$. This is a contradiction as every conjugacy
class in $\Z(G)$ has size~1. This eliminates Case~B and completes
our rather long proof.
\end{proof}


\end{document}